\newtheorem{theorem}{Theorem}[section]
\newtheorem{lemma}[theorem]{Lemma}
\newtheorem{corollary}[theorem]{Corollary}
\newtheorem{question}[theorem]{Question}
\theoremstyle{definition}
\newtheorem{definition}[theorem]{Definition}
\newtheorem{proposition}[theorem]{Proposition}
\theoremstyle{remark}
\newtheorem{example}[theorem]{Example}
\begin{document}

\title[$\pi$-metrizable spaces and strongly $\pi$-metrizable spaces]
{$\pi$-metrizable spaces and strongly $\pi$-metrizable spaces}

\author{Fucai Lin}%
\address{Fucai Lin: Department of Mathematics and Information Science, Zhangzhou Normal University,
Fujian, Zhangzhou, 363000, P.R.China}%
\email{linfucai2008@yahoo.com.cn}
\author{Shou Lin}
\address{(Shou Lin): Department of Mathematics,
Zhangzhou Normal University, Zhangzhou 363000, P. R. China;
Institute of Mathematics, Ningde Teachers' College, Ningde, Fujian
352100, P. R. China} \email{linshou@public.ndptt.fj.cn}
\thanks{Supported by the NSFC (No. 10971185), the NSF of Fujian Province (No. 2009J01013)
and the Educational Department of Fujian Province (No. JA09166) of
China.}

\subjclass[2000]{54B10, 54C10, 54D70}%
\keywords{$\pi$-metrizable spaces; locally $\pi$-metrizable;
strongly $\pi$-metrizable spaces; perfect maps.}

\begin{abstract}
A space $X$ is said to be $\pi$-metrizable if it has a
$\sigma$-discrete $\pi$-base. In this paper, we mainly give
affirmative answers for two questions about $\pi$-metrizable spaces.
The main results are that: (1) A space $X$ is $\pi$-metrizable if
and only if $X$ has a $\sigma$-hereditarily closure-preserving
$\pi$-base; (2) $X$ is $\pi$-metrizable if and only if $X$ is almost
$\sigma$-paracompact and locally $\pi$-metrizable; (3) Open and
closed maps preserve $\pi$-metrizability; (4) $\pi$-metrizability
satisfies hereditarily closure-preserving regular closed sum
theorems. Moreover, we define the notions of second-countable
$\pi$-metrizable and strongly $\pi$-metrizable spaces, and study
some related questions. Some questions about strongly
$\pi$-metrizability are posed.
\end{abstract}
\maketitle

\section{Introduction}
$\pi$-metrizable spaces were first studied by V. Ponomarev as a
necessary conditions for being the absolute of a metrizable
space~\cite{PV}. In \cite{FD}, D. Fearnley has constructed a Moore
and $\pi$-metrizable space which cannot be densely embedded in any
Moore space with the Baire property. In \cite{sd}, D. Stover has
proved that a space $X$ is $\pi$-metrizable if and only if $X$ has a
$\sigma$-locally finite $\pi$-base. It is well known that a regular
space is metrizable if and only if it has a $\sigma$-hereditarily
closure-preserving base. Recently, C. Liu posed the following two
questions in a private communication with the authors.

\begin{question}\label{q0}
If $X$ has a $\sigma$-hereditarily closure-preserving $\pi$-base, is
$X$ $\pi$-metrizable?
\end{question}

\begin{question}\label{q1}
Is $\pi$-metrizability preserved by open and closed maps?
\end{question}

Obviously, if the Question~\ref{q0} is affirmative, then
Question~\ref{q1} is also affirmative.

In this paper, we shall give an affirmative answer for
Questions~\ref{q0} and \ref{q1}, respectively. In fact, we prove
that quasi-open and closed maps preserve $\pi$-metrizability. We
also improve some results in \cite{sd}. Moreover, we define the
notions of second-countable $\pi$-metrizable and strongly
$\pi$-metrizable spaces, and study some related questions.

\begin{definition}
Let $X$ be a space. A collection of nonempty open sets $\mathcal{U}$
of $X$ is called a {\it $\pi$-base} if for every nonempty open set
$O$, there exists an $U\in\mathcal{U}$ such that $U\subset O$. A
space $X$ is said to be {\it $\pi$-metrizable} if it has a
$\sigma$-discrete $\pi$-base. A space $X$ is called a {\it
second-countable $\pi$-metrizable space} if $X$ has a countable
$\pi$-base.
\end{definition}

Obviously, every second-countable $\pi$-metrizable space is
$\pi$-metrizable.

\begin{definition} Let $f:X\rightarrow
Y$ be a map.
\begin{enumerate}
\item $f$ is a {\it compact map} if each $f^{-1}(y)$ is compact in $X$;

\item $f$ is a {\it perfect map} if it is a closed and compact map;

\item $f$ is a {\it quasi-open map} if $\mbox{Int}f(U)\neq\emptyset$ for any non-empty open subset $U$ of $X$;

\item $f$ is called  {\it at most $k$-to-one map} if $|f^{-1}(y)|\leq k$ for every $y\in Y$, where $k\in
\mathbb{N}$;

\item $f$ is an {\it irreducible map} if there does not exist a
proper closed subset $X'$ of $X$ such that $f(X')=Y$.
\end{enumerate}
\end{definition}

\begin{definition}\cite{BD}
Let $\mathcal{P}$ be a family of subsets of a space $X$.
$\mathcal{P}$ is {\it hereditarily closure-preserving} (abbrev. HCP)
if, whenever a subset $S(P)\subset P$ is chosen for each
$P\in\mathcal{P}$, the family $\{S(P): P\in \mathcal{P}\}$ is
closure-preserving.
\end{definition}

\begin{definition}\cite{sd}
A space $X$ is called {\it strongly d-separable} if there exists
$\{K_{n}: n\in \mathbb{N}\}$ such that each $K_{n}$ is a closed
discrete subset of $X$ and $\cup\{K_{n}: n\in \mathbb{N}\}$ is dense
in $X$.
\end{definition}

For a topological space $X$, let $\mathcal{P}$ be a family of
subsets of $X$, and let
$$I(X)=\{x:x \mbox{ is an isolated point of } X\},$$
$$(\mathcal{P})_{x}=\{P\in\mathcal{P}: x\in P\}\ \mbox{for each}\ x\in X.$$
However, we denote $\mathcal{P}_{x}$ by a subfamily of $(\mathcal{P})_{x}$
for each $x\in X$.

Throughout this paper, all spaces are assumed to be $T_{1}$ and
regular, all maps are continuous and onto. Denote the positive natural
numbers by $\mathbb{N}$. We refer the reader to \cite{E} for
notations and terminology not explicitly given here.

\medskip
\section{$\pi$-metrizable spaces}

First, we give two technical lemmas in order to give an
affirmative answer for Question~\ref{q0}.

\begin{lemma}\cite{BD}\label{l0}
Let $\mathcal{P}$ be a HCP collection of open subsets of $X$ and
$A\subset X$. If $x\in A^{d}$ and $G$ is a $G_{\delta}$-set of $X$
such that $x\in G$ and $G\cap (A-\{x\})=\emptyset$, then
$(\mathcal{P})_{x}$ is finite
\end{lemma}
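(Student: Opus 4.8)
The plan is to argue by contradiction. Suppose $(\mathcal{P})_{x}$ is infinite and fix a sequence of distinct members $P_{1},P_{2},\dots\in(\mathcal{P})_{x}$. Writing $G=\bigcap_{n}G_{n}$ with each $G_{n}$ open, I would first normalize this representation: replacing $G_{n}$ by $G_{1}\cap\cdots\cap G_{n}$, I may assume the $G_{n}$ are decreasing, and since $X$ is $T_{1}$ and regular I may shrink further so that $\overline{G_{n+1}}\subseteq G_{n}$ for every $n$. This yields $\bigcap_{n}\overline{G_{n}}=\bigcap_{n}G_{n}=G$, a fact I will use to locate cluster points.

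Next I would use the limit-point hypothesis to extract witnesses. For each $n$ the set $W_{n}:=P_{1}\cap\cdots\cap P_{n}\cap G_{n}$ is open and contains $x$, so, since $x\in A^{d}$, I may choose $a_{n}\in W_{n}\cap(A-\{x\})$. Two features matter. First, there is a tail-containment: for $m\geq n$ one has $a_{m}\in P_{n}$ and $a_{m}\in G_{n}$, so the whole tail $\{a_{m}:m\geq n\}$ lies in $P_{n}\cap G_{n}$. Second, each $a_{n}\in A-\{x\}$ lies outside $G$ because $G\cap(A-\{x\})=\emptyset$; hence no value can be repeated infinitely often (such a point would lie in $\bigcap_{n}G_{n}=G$), and after passing to a subsequence the $a_{n}$ are distinct. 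Now I invoke the basic consequence of the HCP property: taking $S(P_{n})=\{a_{n}\}$ and $S(P)=\emptyset$ otherwise, hereditary closure-preservation forces every subset of $D:=\{a_{n}:n\in\mathbb{N}\}$ to be closed, so $D$ is closed and discrete and has \emph{no} limit point in $X$. The whole argument therefore reduces to exhibiting a limit point of $D$. Moreover, any limit point $y$ of $D$ is automatically located: since $\{a_{n}:n<m\}$ is finite, $y$ is a limit point of $\{a_{n}:n\geq m\}\subseteq\overline{G_{m}}$, whence $y\in\bigcap_{m}\overline{G_{m}}=G$, while $y\in\overline{D}\subseteq\overline{A}=A\cup A^{d}$ forces $y=x$ or $y\in A^{d}$. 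Either way a single limit point of $D$ already contradicts its closed-discreteness.

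The heart of the matter — and the step I expect to be the main obstacle — is guaranteeing that $D$ actually accumulates, that is, that $x\in\overline{D}$. In a first-countable setting this is immediate: one chooses the $a_{n}$ inside a neighbourhood base shrinking to $x$. Here there is no such base, and an infinite set can be closed and discrete, so the accumulation must be \emph{manufactured} from the available data: regularity, the nested closed neighbourhoods $\overline{G_{n+1}}\subseteq G_{n}$, and the disjointness $G\cap(A-\{x\})=\emptyset$ which forces each $a_{n}$ to leave $G$ and thereby pins any cluster to $x$. Concretely, I would run the choice of the $a_{n}$ recursively, at each stage carving the selection out of suitable test neighbourhoods of $x$ extracted from the $G_{n}$, so as to trap the tails of $(a_{n})$ inside every neighbourhood of $x$; once $x\in\overline{D}$ is secured, $x\in\overline{D}\setminus D$ contradicts the closed-discreteness of $D$ obtained from the HCP hypothesis, proving that $(\mathcal{P})_{x}$ is finite. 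The delicate bookkeeping needed to reconcile the freedom in selecting $a_{n}$ with the rigidity imposed by an arbitrary neighbourhood of $x$ is precisely the part that will require the most care.
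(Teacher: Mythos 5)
Your proposal has a genuine gap, and you have located it yourself: everything hinges on showing $x\in\overline{D}$ for the selected set $D=\{a_{n}:n\in\mathbb{N}\}$, and that step is not carried out. Moreover, it cannot be carried out along the lines you describe. The sets $W_{n}=P_{1}\cap\cdots\cap P_{n}\cap G_{n}$ form a decreasing sequence of neighbourhoods of $x$, but nothing in the hypotheses makes them (or any countable family ``extracted from the $G_{n}$'') a neighbourhood base at $x$; the point $x$ may have uncountable character, which is exactly the situation the lemma is designed to handle. Consequently, for an arbitrary neighbourhood $V$ of $x$ there is no stage of your recursion at which you are forced to place $a_{n}$ inside $V$, so no selection of single points $a_{n}\in W_{n}$ can be guaranteed to accumulate at $x$. (Indeed, your own step 5 shows that under the HCP hypothesis every such selection is closed and discrete, so $x\notin\overline{D}$ for every possible choice; the contradiction you seek can therefore never be produced by a pointwise selection argument.) Steps 1--6 of your outline are correct but they are the easy part; the ``delicate bookkeeping'' you defer is the entire content of the lemma, and it is not a bookkeeping issue but a structural obstruction. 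Note also that the paper itself does not prove this lemma; it quotes it from Burke--Engelking--Lutzer, so there is no in-paper argument to fall back on.

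The repair is to replace selected points by the full traces, for which accumulation at $x$ is automatic. Put $S_{n}=G_{n}\cap P_{1}\cap\cdots\cap P_{n}\cap(A\setminus\{x\})$. Since $G_{n}\cap P_{1}\cap\cdots\cap P_{n}$ is an open neighbourhood of $x$ and $x\in A^{d}$, one has $x\in\overline{S_{n}}$ for every $n$ with no choices made; also $\bigcap_{n}S_{n}\subseteq G\cap(A\setminus\{x\})=\emptyset$, so with $T_{n}=S_{n}\setminus S_{n+1}\subseteq P_{n}$ one gets $S_{1}=\bigcup_{n}T_{n}$. Hereditary closure-preservation applied to the choice $S(P_{n})=T_{n}$ yields $\overline{S_{1}}=\bigcup_{n}\overline{T_{n}}$, so $x\in\overline{T_{n}}$ for some $n$. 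But every point of $T_{n}$ lies outside $G_{n+1}$ or outside $P_{n+1}$, hence $T_{n}\subseteq(X\setminus G_{n+1})\cup(X\setminus P_{n+1})$, a union of two closed sets avoiding $x$; therefore $\overline{T_{n}}\subseteq(X\setminus G_{n+1})\cup(X\setminus P_{n+1})$ does not contain $x$, a contradiction. This uses the openness of the $P_{n}$ and $G_{n}$ and the HCP property exactly once each, and it avoids the accumulation problem entirely; no regularity normalization of the $G_{n}$ is needed.
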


\begin{lemma}\label{l1}
Let $X$ have a $\sigma$-HCP $\pi$-base $\mathcal{P}=\bigcup_{n\in
\mathbb{N}}\mathcal{P}_{n}$, where $\mathcal{P}_{n}$ is HCP for each
$n\in \mathbb{N}$. Then $(\mathcal{P}_{n})_{x}$ is finite for each
$x\in X\setminus I(X)$ and $n\in \mathbb{N}$.
\end{lemma}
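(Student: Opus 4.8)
The plan is to deduce the statement from Lemma~\ref{l0} applied separately to each family $\mathcal{P}_{n}$, which is $\mathrm{HCP}$. Thus, fixing a non-isolated point $x$, it is enough to produce a single set $A\subset X$ with $x\in A^{d}$ together with a $G_{\delta}$-set $G$ satisfying $x\in G$ and $G\cap(A-\{x\})=\emptyset$; the \emph{same} pair $(A,G)$ then forces $(\mathcal{P}_{n})_{x}$ to be finite for every $n$ at once. I would first reduce this to finding merely a $G_{\delta}$-set $G$ with $x\in G$ that fails to be a neighbourhood of $x$. Indeed, given such a $G$, put $A:=X\setminus G$. Since $x\notin A$, we have $G\cap(A-\{x\})=G\cap A=\emptyset$ trivially, while $x\in A^{d}$ is equivalent to $x\in\overline{X\setminus G}$, i.e. to $G$ not being a neighbourhood of $x$. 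So everything comes down to the existence of such a $G$.

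When $\{x\}$ is a $G_{\delta}$-set the choice $G=\{x\}$ works immediately, because $x\notin I(X)$ gives $x\in\overline{X\setminus\{x\}}$. In general I would build $G$ by a shrinking construction. Using that $X$ is regular and $T_{1}$ and that $x\notin I(X)$, the set $V\setminus\{x\}$ is a nonempty open set for every open $V\ni x$; since $\mathcal{P}$ is a $\pi$-base, it contains some member inside $V\setminus\{x\}$. Inductively I would choose open neighbourhoods $V_{1}\supset V_{2}\supset\cdots$ of $x$ with $\overline{V_{k+1}}\subset V_{k}$ and points $x_{k}\in V_{k}\setminus\overline{V_{k+1}}$ taken from a $\pi$-base member contained in $V_{k}\setminus\{x\}$, and set $G:=\bigcap_{k}V_{k}$. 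Then $G$ is a $G_{\delta}$-set containing $x$, and each $x_{k}\notin\overline{V_{k+1}}\supset G$ lies outside $G$.

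The main obstacle is to show that this $G$ is \emph{not} a neighbourhood of $x$, equivalently that $x\in\overline{X\setminus G}$: the chosen points $x_{k}$ witness points outside $G$, but nothing so far guarantees that they cluster back at $x$, and in a space that need not be first countable one cannot expect a sequence converging to $x$. This is precisely the step that must consume the $\mathrm{HCP}$ hypothesis. I would argue by contradiction: if $G$ were a neighbourhood of $x$, there would be an open $O$ with $x\in O\subset V_{k}$ for all $k$, so that $O$ sits inside each of infinitely many members of $\mathcal{P}_{n}$ through $x$; selecting distinct points of $O$, one assigned to each such member, and invoking that $\mathcal{P}_{n}$ is $\mathrm{HCP}$ forces the selected set to be closed and discrete, whereas the non-isolation of $x$ (through the closure of a suitable countable set reaching $x$) produces a cluster point, a contradiction. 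I expect the truly delicate point to be extracting this cluster point at $x$ — that is, ruling out a non-isolated point all of whose $G_{\delta}$-sets are neighbourhoods — and this is where regularity, non-isolation of $x$, and the $\mathrm{HCP}$ $\pi$-base must be combined most carefully.
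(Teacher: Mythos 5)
Your opening reduction is sound: if you can produce a $G_{\delta}$-set $G$ with $x\in G$ that is not a neighbourhood of $x$, then $A=X\setminus G$ together with $G$ satisfies the hypotheses of Lemma~\ref{l0} for every $\mathcal{P}_{n}$ simultaneously, and the lemma follows. The genuine gap is that you never produce such a $G$, and the construction you sketch cannot do so. The set $G=\bigcap_{k}V_{k}$ obtained by shrinking neighbourhoods may perfectly well be a neighbourhood of $x$: at a non-isolated P-point (e.g.\ the point $\omega_{1}$ in the ordinal space $\omega_{1}+1$) every countable intersection of neighbourhoods is again a neighbourhood, and nothing in your construction rules this out. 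You flag this yourself as ``the truly delicate point,'' but the contradiction argument you then outline does not close it: from $x\in O\subset\bigcap_{k}V_{k}$ you cannot conclude that $O$ lies inside infinitely many members of $(\mathcal{P}_{n})_{x}$ --- the $V_{k}$ are arbitrary neighbourhoods, not $\pi$-base members, and containment in $V_{k}$ yields no containment relation with elements of $\mathcal{P}_{n}$. Moreover, arguing that an infinite $(\mathcal{P}_{n})_{x}$ leads to a contradiction is essentially re-proving Lemma~\ref{l0} rather than verifying its hypotheses.

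The missing idea --- which is the entire content of the paper's proof --- is to build $G$ out of the $\pi$-base itself, using HCP. For each $n$ and each $P\in\mathcal{P}_{n}$ choose $x_{P}\in P\setminus\{x\}$ (possible because $x$ is not isolated, so $P\neq\{x\}$). Since $\mathcal{P}_{n}$ is HCP, the family $\{\{x_{P}\}:P\in\mathcal{P}_{n}\}$ is closure-preserving, so $F_{n}=\{x_{P}:P\in\mathcal{P}_{n}\}$ is closed. Hence $A=\bigcup_{n}F_{n}$ is an $F_{\sigma}$-set with $x\notin A$, and every open $U\ni x$ contains some $P\in\mathcal{P}$ and therefore the point $x_{P}\in U\cap A$; thus $x\in A^{d}$ and $G=X\setminus A$ is the required $G_{\delta}$-set that fails to be a neighbourhood of $x$. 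This is precisely where the HCP hypothesis is consumed, and it is the step absent from your proposal.
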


\begin{proof}
Fix a point $x\in X\setminus I(X)$. For each $n\in \mathbb{N}$ and
$P\in \mathcal{P}_{n}$, we choose a point $x_{P}\in
P\setminus\{x\}$. Let $F_{n}=\{x_{P}:P\in \mathcal{P}_{n}\}$. Then
$F_{n}$ is closed. Put $A=\bigcup_{n\in \mathbb{N}}F_{n}$ and
$G=X-A$. For each $x\in U$ with $U$ open in $X$, there exists a
$P\in \mathcal{P}$ such that $P\subset U$, and hence $x_{P}\in U\cap
(A-\{x\})\neq\emptyset$. Therefore, $x\in A^{d}\cap G$. Obviously,
$G$ is a $G_{\delta}$-set and $G\cap (A-\{x\})=\emptyset$. Hence
$(\mathcal{P}_{n})_{x}$ is finite by Lemma~\ref{l0}.
\end{proof}

A collection of sets $\mathcal{U}$ in a space $X$ each with nonempty
interior is called a {\it $\pi_{\ast}$-base}~\cite{sd} if for each
open set $O$ there is an $U\in\mathcal{U}$ such that $U\subset O$.

\begin{theorem}\label{t0}
For a topological space $X$, the following are equivalent:
\begin{enumerate}
\item $X$ is a
$\pi$-metrizable space;
\item $X$ has a $\sigma$-HCP $\pi$-base;
\item $X$ has a $\sigma$-locally finite $\pi$-base.
\end{enumerate}
\end{theorem}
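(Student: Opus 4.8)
The plan is to establish the cycle through the easy implications $(1)\Rightarrow(3)\Rightarrow(2)$ together with the one substantive implication $(2)\Rightarrow(3)$, and then to invoke Stover's theorem \cite{sd} (that $\pi$-metrizability is equivalent to the existence of a $\sigma$-locally finite $\pi$-base) for $(3)\Rightarrow(1)$. The two easy steps are immediate: every discrete family is locally finite, so a $\sigma$-discrete $\pi$-base is $\sigma$-locally finite, giving $(1)\Rightarrow(3)$; and every locally finite family is HCP, giving $(3)\Rightarrow(2)$. All the work is in $(2)\Rightarrow(3)$, i.e. refining a $\sigma$-HCP $\pi$-base into a $\sigma$-locally finite one.

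For $(2)\Rightarrow(3)$ I would write the given $\sigma$-HCP $\pi$-base as $\mathcal{P}=\bigcup_{n}\mathcal{P}_{n}$ with each $\mathcal{P}_{n}$ HCP, so that by Lemma~\ref{l1} the family $(\mathcal{P}_{n})_{x}$ is finite for every $x\in X\setminus I(X)$. First I would dispose of the isolated points. For each isolated $x$ the set $\{x\}$ is open, so the $\pi$-base must contain $\{x\}$; say $\{x\}\in\mathcal{P}_{n}$. The singletons in $\mathcal{P}_{n}$ indexed by isolated points form a subfamily of the HCP family $\mathcal{P}_{n}$, and a HCP family of singletons is precisely a closed discrete point set, whence the corresponding family $\mathcal{I}_{n}=\{\{x\}:x\in I(X),\ \{x\}\in\mathcal{P}_{n}\}$ of open singletons is discrete. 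Collecting these over $n$ yields a $\sigma$-discrete family that already serves the $\pi$-base role at every open set meeting $I(X)$.

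The core step treats the members that avoid isolated points. Put $\mathcal{P}_{n}'=\{P\in\mathcal{P}_{n}:P\cap I(X)=\emptyset\}$; for each such $P$ pick a (necessarily non-isolated) point $z_{P}\in P$ and, using regularity, an open set $W_{P}$ with $z_{P}\in W_{P}\subseteq\overline{W_{P}}\subseteq P$. Since $\overline{W_{P}}\subseteq P$ is a selection inside $P$, the HCP property of $\mathcal{P}_{n}$ forces $\{\overline{W_{P}}:P\in\mathcal{P}_{n}'\}$ to be closure-preserving. This family is also point-finite: a non-isolated point lies in only finitely many $\overline{W_{P}}$ by Lemma~\ref{l1}, while no $\overline{W_{P}}$ meets $I(X)$ because $W_{P}\subseteq P\subseteq X\setminus I(X)$ and $X\setminus I(X)$ is closed, so $\overline{W_{P}}\subseteq X\setminus I(X)$. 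The decisive observation is then the elementary fact that a point-finite, closure-preserving family of closed sets is locally finite: were it not locally finite at $x$, deleting the finitely many members containing $x$ would leave a subfamily none of whose members contains $x$ but whose union still clusters at $x$, so $x$ would lie in the closure of the union, which by closure-preservation equals the union of the members (they are closed), forcing $x$ into one of them, a contradiction. Hence $\{W_{P}:P\in\mathcal{P}_{n}'\}$ is locally finite, and each level $\mathcal{I}_{n}\cup\{W_{P}:P\in\mathcal{P}_{n}'\}$ is locally finite. The resulting $\mathcal{Q}=\bigcup_{n}\bigl(\mathcal{I}_{n}\cup\{W_{P}:P\in\mathcal{P}_{n}'\}\bigr)$ is $\sigma$-locally finite and is a $\pi$-base: an open set meeting $I(X)$ contains one of the singletons, while an open set $O$ with $O\cap I(X)=\emptyset$ contains some original $P\subseteq O$, which then lies in $\mathcal{P}_{n}'$ and satisfies $W_{P}\subseteq P\subseteq O$.

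I expect the main obstacle to be exactly the upgrade from point-finiteness to local finiteness, which fails for arbitrary open families. The trick that makes it go through is to pass to the closures $\overline{W_{P}}$, turning the point-finite family into a point-finite family of \emph{closed} sets for which the closure-preserving identity collapses local clustering into membership, and, in tandem, to quarantine the isolated points so that these closures genuinely avoid $I(X)$ and the point-finiteness of Lemma~\ref{l1} applies at every point. Secondary care is needed to verify that the refinement replacing $P$ by the smaller $W_{P}$ still yields a $\pi$-base, which is why the representative $z_{P}$ and the containment $W_{P}\subseteq P$ are arranged as above.
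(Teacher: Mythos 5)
Your proof is correct and follows essentially the same route as the paper's: both reduce everything to the implication $(2)\Rightarrow(3)$, handle the isolated points by a separate ($\sigma$-)discrete family of open singletons, shrink the remaining members to closed subsets via regularity, and then upgrade the point-finiteness supplied by Lemma~\ref{l1} to local finiteness using closure-preservation (your explicit ``point-finite closure-preserving closed family is locally finite'' lemma is exactly the argument inlined in the paper's Claim). No substantive difference.
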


\begin{proof}
(1)$\Rightarrow$(2) is obvious. (3)$\Rightarrow$(1) by \cite[Theorem
2.2]{sd}. Hence we only need to prove (2)$\Rightarrow$(3).

Let $\mathcal{P}=\bigcup_{n\in \mathbb{N}}\mathcal{P}_{n}$ be a
$\sigma$-HCP $\pi$-base of $X$, where each $\mathcal{P}_{n}$ is HCP.
By the regularity, for each $P\in \mathcal{P}$, there is a nonempty
closed subset $B_{P}$ in $X$ such that $B_{P}\subset P$, and if
$P\not\subset I(X)$ then $\mbox{int}(B_{P})\cap (X\setminus
I(X))\neq\emptyset$. Let $\mathcal{B}=\bigcup_{n\in
\mathbb{N}}\mathcal{B}_{n}$, where $\mathcal{B}_{n}=\{B_{P}: P\in
\mathcal{P}_{n}\}$ for each $n\in \mathbb{N}$. It is easy to see
that $\mathcal{B}=\bigcup_{n\in \mathbb{N}}\mathcal{B}_{n}$ is a
$\sigma$-HCP $\pi_{\ast}$-base of $X$. For each $n\in \mathbb{N}$,
let $X(n)=\{x\in X: \mathcal{B}_{n}\ \mbox{is locally finite at
point}\ x\}$.

Claim: For each $n\in \mathbb{N}$, $X\setminus I(X)\subset X(n)$.

Indeed, put $x\in X\setminus I(X)$. It follows from Lemma~\ref{l1} that
$(\mathcal{P}_{n})_{x}$ is finite, and hence $(\mathcal{B}_{n})_{x}$
is also finite. Therefore, $\cup (\mathcal{B}_{n}\setminus
(\mathcal{B}_{n})_{x})$ is closed and does not contain $x$, and
hence $X\setminus (\cup (\mathcal{B}_{n}\setminus
(\mathcal{B}_{n})_{x}))$ is an open neighborhood of $x$ and at most
intersects finitely many elements of $\mathcal{B}_{n}$. So, $x\in
X(n)$.

It is obvious that $X(n)$ is open for each $n\in \mathbb{N}$. Let
$\mathcal{P}_{n}^{\prime}=\{\mbox{int}(B)\cap X(n):
B\in\mathcal{B}_{n}\}$. Then $\mathcal{P}_{n}^{\prime}$ is a locally
finite collection of open subsets of $X$ for each $n\in \mathbb{N}$.
Put $\mathcal{P}_{n}^{\prime\prime}=\{\{x\}: \{x\}\in
\mathcal{B}_{n}\}$ for each $n\in \mathbb{N}$. Then
$\mathcal{P}_{n}^{\prime\prime}$ is discrete for each $n\in
\mathbb{N}$. Let $\mathcal{P}^{\prime}=\bigcup_{n\in
\mathbb{N}}(\mathcal{P}_{n}^{\prime}\cup\mathcal{P}_{n}^{\prime\prime})$.
It is easy to see that $\mathcal{P}^{\prime}$ is a  $\sigma$-locally
finite $\pi$-base for $X$. Indeed, for each nonempty open subset $O$
of $X$, if $O\cap I(X)\neq\emptyset$, then we choose a point $x\in
O\cap I(X)$ and therefore, $\{x\}\in\mathcal{B}$ and $\{x\}\subset O$;
if $O\cap I(X)=\emptyset$, then there is a $B\in \mathcal{B}$ with
$B\subset O$ since $\mathcal{B}$ is a $\pi_{\ast}$-base, and
therefore, $\emptyset\neq\mbox{int}(B)\cap X(n)\subset O$ by the
Claim.
\end{proof}

\begin{corollary}\label{c1}
A space $X$ is $\pi$-metrizable if and only if $X$ has a
$\sigma$-HCP $\pi_{\ast}$-base $\mathcal{P}$ such that, for every
$P\in\mathcal{P}$, $P$ is a regular closed set of $X$.
\end{corollary}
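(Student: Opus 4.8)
The plan is to use Theorem~\ref{t0} as the pivot, since it already identifies $\pi$-metrizability with the existence of a $\sigma$-HCP $\pi$-base of \emph{open} sets. The corollary then amounts to translating freely between an open $\sigma$-HCP $\pi$-base and a regular-closed $\sigma$-HCP $\pi_{\ast}$-base, and my two directions will realize the two translations.

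For the sufficiency direction, suppose $X$ carries a $\sigma$-HCP $\pi_{\ast}$-base $\mathcal{P}=\bigcup_{n\in\mathbb{N}}\mathcal{P}_{n}$ (each $\mathcal{P}_{n}$ HCP) all of whose members are regular closed. I would simply pass to interiors, setting $U_{P}=\mbox{int}(P)$ for each $P\in\mathcal{P}$. Each $U_{P}$ is nonempty because members of a $\pi_{\ast}$-base have nonempty interior, and $\{U_{P}:P\in\mathcal{P}\}$ is a $\pi$-base, since any nonempty open $O$ contains some $P\in\mathcal{P}$ and hence $U_{P}\subset P\subset O$. Because $U_{P}\subset P$, every subfamily selection from $\{U_{P}:P\in\mathcal{P}_{n}\}$ is also a selection from $\mathcal{P}_{n}$, so the HCP property of $\mathcal{P}_{n}$ forces the HCP property of $\{U_{P}:P\in\mathcal{P}_{n}\}$. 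Thus $\{U_{P}:P\in\mathcal{P}\}$ is a $\sigma$-HCP $\pi$-base and $X$ is $\pi$-metrizable by Theorem~\ref{t0}. Notice that the regular-closedness of the $P$'s is not even used here; it is only needed to make the converse produce the stronger object.

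For the necessity direction, assume $X$ is $\pi$-metrizable. By Theorem~\ref{t0} it has a $\sigma$-HCP $\pi$-base $\mathcal{U}=\bigcup_{n\in\mathbb{N}}\mathcal{U}_{n}$ of nonempty open sets with each $\mathcal{U}_{n}$ HCP. Here regularity enters: for each $U\in\mathcal{U}$ I would fix $x_{U}\in U$ and, by regularity, an open $V_{U}$ with $x_{U}\in V_{U}\subset\overline{V_{U}}\subset U$, and then set $C_{U}=\overline{V_{U}}$. Each $C_{U}$ is regular closed, being the closure of the open set $V_{U}$ (for any open $V$ one checks $\overline{\mbox{int}(\overline{V})}=\overline{V}$), and its interior contains $V_{U}$, hence is nonempty. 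Since $C_{U}=\overline{V_{U}}\subset U$, the family $\{C_{U}:U\in\mathcal{U}\}$ is a $\pi_{\ast}$-base: any nonempty open $O$ contains some $U\in\mathcal{U}$, whence $C_{U}\subset U\subset O$. Exactly as above, the inclusion $C_{U}\subset U$ makes each $\{C_{U}:U\in\mathcal{U}_{n}\}$ HCP, so $\{C_{U}:U\in\mathcal{U}\}$ is the desired $\sigma$-HCP $\pi_{\ast}$-base of regular closed sets. Isolated points cause no trouble: if $U=\{x\}$ with $x$ isolated, then necessarily $V_{U}=C_{U}=\{x\}$, which is clopen and hence regular closed.

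The only genuinely substantive step is the necessity direction, and within it the use of regularity to shrink each open $\pi$-base element $U$ to a regular closed set lying \emph{inside} $U$. Everything else is bookkeeping: the observations that passing to a subset preserves the HCP property, that interiors of $\pi_{\ast}$-base members and closures of $\pi$-base elements remain nonempty, and that the closure of any open set is automatically regular closed. I do not anticipate any real obstacle beyond keeping the shrunken sets inside the originals, so that both the $\pi_{\ast}$-base property and the HCP property survive simultaneously.
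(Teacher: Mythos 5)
Your proof is correct and follows essentially the route the paper intends: the corollary is stated without a separate proof as an immediate consequence of Theorem~\ref{t0}, whose proof of (2)$\Rightarrow$(3) already performs the same regularity-based shrinking of open $\pi$-base elements to closed sets with nonempty interior. Both of your directions (passing to interiors of the regular closed sets, and replacing each open $U$ by $\overline{V_U}$ with $\overline{V_U}\subset U$) are sound, including the key bookkeeping observation that replacing each member of an HCP family by a subset preserves the HCP property.
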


In \cite{sd}, D. Stover has proved that open perfect or irreducible
perfect maps preserve $\pi$-metrizability. However, we have the
following Theorem~\ref{t11}, Corollaries~\ref{c2} and~\ref{c3},
which give an affirmative answer for Question~\ref{q1} and also
improve some results in \cite{sd}.

\begin{theorem}\label{t11}
Quasi-open and closed maps preserve $\pi$-metrizability.
\end{theorem}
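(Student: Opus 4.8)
The plan is to start from a good $\pi$-base of $X$, push it forward by $f$, and recognize the image as exactly the kind of $\pi_{\ast}$-base described in Corollary~\ref{c1}. So let $f:X\to Y$ be quasi-open and closed (continuous, onto) with $X$ $\pi$-metrizable, and fix a $\sigma$-HCP $\pi$-base $\mathcal{P}=\bigcup_{n\in\mathbb{N}}\mathcal{P}_{n}$ of $X$, each $\mathcal{P}_{n}$ being HCP. Using regularity and $T_{1}$, for each $P\in\mathcal{P}$ I would choose a nonempty open $V_{P}$ with $\overline{V_{P}}\subset P$ and set $B_{P}=\overline{V_{P}}$, a closed set with $\mbox{int}(B_{P})\neq\emptyset$ and $B_{P}\subset P$. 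Since each $\mathcal{P}_{n}$ is HCP and $B_{P}\subset P$, the family $\mathcal{B}_{n}=\{B_{P}:P\in\mathcal{P}_{n}\}$ is again HCP, because HCP is inherited by any choice of subsets of its members.

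The key step, and the main obstacle, is a transfer lemma: a closed continuous surjection carries an HCP family to an HCP family. I would prove this directly rather than settle for mere closure-preservation, since only the hereditary version lets us re-enter Corollary~\ref{c1}. Given an HCP family $\{A_{\alpha}\}$ and, for each $\alpha$, an arbitrary subset $T_{\alpha}\subset f(A_{\alpha})$, put $S_{\alpha}=A_{\alpha}\cap f^{-1}(T_{\alpha})$, so that $S_{\alpha}\subset A_{\alpha}$ and $f(S_{\alpha})=T_{\alpha}$. For any index set $I$, HCP of $\{A_\alpha\}$ gives that $\overline{\bigcup_{\alpha\in I}S_{\alpha}}=\bigcup_{\alpha\in I}\overline{S_{\alpha}}$ is closed; applying the closed map $f$ together with continuity ($f(\overline{S})\subset\overline{f(S)}$) yields $\overline{\bigcup_{\alpha\in I}T_{\alpha}}\subset f\bigl(\overline{\bigcup_{\alpha\in I}S_{\alpha}}\bigr)=\bigcup_{\alpha\in I}f(\overline{S_{\alpha}})\subset\bigcup_{\alpha\in I}\overline{T_{\alpha}}$, which is the nontrivial inclusion for closure-preservation of $\{T_{\alpha}\}$. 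Applying this to $\mathcal{B}_{n}$ shows that each $f(\mathcal{B}_{n})=\{f(B_{P}):P\in\mathcal{P}_{n}\}$ is an HCP family of closed subsets of $Y$. The subset-selection bookkeeping ($S_\alpha\subset A_\alpha$ with $f(S_\alpha)=T_\alpha$) is exactly where the care is needed.

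It remains to regularize these images and verify the $\pi_{\ast}$-base property, and this is where quasi-openness enters. Since $\mbox{int}(B_{P})$ is nonempty and open, $f(\mbox{int}(B_{P}))$ has nonempty interior, whence $\mbox{int}(f(B_{P}))\neq\emptyset$. I would set $R_{P}=\overline{\mbox{int}(f(B_{P}))}$, a regular closed set with $R_{P}\subset f(B_{P})$ and nonempty interior; being a choice of subsets of the HCP family $f(\mathcal{B}_{n})$, the family $\{R_{P}:P\in\mathcal{P}_{n}\}$ is still HCP. Finally, for a nonempty open $O\subset Y$, continuity and surjectivity make $f^{-1}(O)$ nonempty open, so some $P\in\mathcal{P}$ satisfies $P\subset f^{-1}(O)$; then $B_{P}\subset P\subset f^{-1}(O)$, so $R_{P}\subset f(B_{P})\subset O$. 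Hence $\{R_{P}:P\in\mathcal{P}\}=\bigcup_{n\in\mathbb{N}}\{R_{P}:P\in\mathcal{P}_{n}\}$ is a $\sigma$-HCP $\pi_{\ast}$-base of $Y$ consisting of regular closed sets, and Corollary~\ref{c1} gives that $Y$ is $\pi$-metrizable.
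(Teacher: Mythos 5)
Your proof is correct and follows essentially the same route as the paper's: push the $\sigma$-HCP $\pi$-base forward, use closedness to keep the HCP property and quasi-openness to keep nonempty interiors, then invoke the $\sigma$-HCP characterization of $\pi$-metrizability. The only differences are cosmetic --- the paper applies Theorem~\ref{t0} directly to $\{\mbox{int}f(P):P\in\mathcal{P}\}$ rather than detouring through regular closed sets and Corollary~\ref{c1}, and it simply asserts the fact that closed maps preserve HCP families, which you prove in detail.
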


\begin{proof}
Let $f:X\rightarrow Y$ be an quasi-open and closed map, where $X$ is
a $\pi$-metrizable space. It follows from Theorem~\ref{t0} that $X$
has a $\sigma$-HCP $\pi$-base $\mathcal{P}$. Since closed maps
preserve HCP collections, $f(\mathcal{P})$ is a $\sigma$-HCP
collection of subsets of $Y$. Since $f$ is a quasi-open map,
$\{\mbox{int}f(P): P\in \mathcal{P}\}$ is a $\pi$-base for $X$.
Hence $Y$ is a $\pi$-metrizable space by Theorem~\ref{t0}.
\end{proof}

\begin{corollary}\label{c2}
Open and closed maps preserve $\pi$-metrizability.
\end{corollary}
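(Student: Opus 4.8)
The plan is to deduce this corollary directly from Theorem~\ref{t11} by observing that every open map is quasi-open. Recall that $f$ is quasi-open provided $\mathrm{Int}\,f(U)\neq\emptyset$ for each nonempty open $U\subset X$. If $f$ is open, then $f(U)$ is itself open in $Y$, so $\mathrm{Int}\,f(U)=f(U)$; and since $U\neq\emptyset$ forces $f(U)\neq\emptyset$, we obtain $\mathrm{Int}\,f(U)=f(U)\neq\emptyset$. Hence every open map is quasi-open.

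Consequently, any map that is both open and closed is in particular both quasi-open and closed. First I would record the implication ``open $\Rightarrow$ quasi-open'' exactly as above, and then I would invoke Theorem~\ref{t11} to conclude that such a map preserves $\pi$-metrizability. There is essentially no obstacle here: the entire content of the corollary is carried by Theorem~\ref{t11}, and the only thing left to check is the immediate set-theoretic fact that the image of a nonempty open set under an open map has nonempty interior. In short, the class of open-and-closed maps sits inside the class of quasi-open-and-closed maps, so the corollary is merely a specialization of the theorem.
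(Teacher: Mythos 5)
Your proof is correct and matches the paper's (implicit) argument exactly: the corollary is stated as an immediate consequence of Theorem~\ref{t11}, resting on the observation that every open map is quasi-open. Nothing further is needed.
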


\begin{corollary}\label{c3}
Irreducible closed maps preserve $\pi$-metrizability.
\end{corollary}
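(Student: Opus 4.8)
The plan is to deduce this directly from Theorem~\ref{t11} by showing that every irreducible closed map is quasi-open; once that reduction is in place, the corollary is immediate, since quasi-open and closed maps preserve $\pi$-metrizability.

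To establish quasi-openness, let $f:X\to Y$ be an irreducible closed (continuous, onto) map and let $U$ be an arbitrary nonempty open subset of $X$. First I would pass to the complement $X\setminus U$, which is a proper closed subset of $X$ precisely because $U\neq\emptyset$. This is where irreducibility enters: by the defining property, no proper closed subset can be mapped onto $Y$, so $f(X\setminus U)\neq Y$, and hence the set $V:=Y\setminus f(X\setminus U)$ is nonempty. Since $f$ is closed, $f(X\setminus U)$ is closed in $Y$, so $V$ is open.

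The next step is to verify $V\subset f(U)$. For any $y\in V$ we have $y\notin f(X\setminus U)$, which is to say $f^{-1}(y)\cap(X\setminus U)=\emptyset$, so $f^{-1}(y)\subset U$; as $f$ is onto, $f^{-1}(y)\neq\emptyset$, whence $y\in f(U)$. Thus $\emptyset\neq V\subset f(U)$ with $V$ open, giving $\mbox{Int}f(U)\supset V\neq\emptyset$. Since $U$ was an arbitrary nonempty open set, $f$ is quasi-open, and an application of Theorem~\ref{t11} completes the argument.

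I do not anticipate a serious obstacle here; the only point requiring care is the combined use of irreducibility on the complement $X\setminus U$ and of closedness of $f$, which together turn $f(X\setminus U)$ into a closed set whose (nonempty) complement sits inside $f(U)$. The essential content is the classical observation that irreducible closed maps are quasi-open, after which the result reduces entirely to the already-proved Theorem~\ref{t11}.
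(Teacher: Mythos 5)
Your argument is correct and follows the same route as the paper: reduce to Theorem~\ref{t11} via the classical fact that an irreducible closed map is quasi-open. The paper simply asserts this fact from the definitions, whereas you spell out the standard verification (that $Y\setminus f(X\setminus U)$ is a nonempty open subset of $f(U)$), which is a faithful and complete elaboration of the same proof.
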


\begin{proof}
It follows from the definition of the irreducible closed mappings that an irreducible closed map is
quasi-open. Therefore, irreducible closed maps preserve
$\pi$-metrizability by Theorem~\ref{t11}.
\end{proof}

However, perfect maps don't preserve $\pi$-metrizability, see
Example~\ref{e0}.

A topological property $\mathscr{P}$ satisfies {\it hereditarily
closure-preserving regular closed sum theorems} if a topological
space $X$ has a hereditarily closure-preserving regular closed
covering $\{F_{\alpha}\}_{\alpha\in A}$ such that $F_{\alpha}$ has
topological property $\mathscr{P}$ for every $\alpha\in A$, then $X$
has topological property $\mathscr{P}$.

\begin{lemma}\label{l2}
Suppose the topological property $\mathscr{P}$ satisfies the following two
conditions:
\begin{enumerate}
\item $\mathscr{P}$ is preserved by
topological sums;

\item $\mathscr{P}$ is preserved by
quasi-open and closed maps,
\end{enumerate}
then $\mathscr{P}$ satisfies hereditarily closure-preserving regular
closed sum theorem.
\end{lemma}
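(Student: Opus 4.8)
The plan is to realize $X$ as a quasi-open closed image of the topological sum of the members of the covering, and then invoke the two hypotheses on $\mathscr{P}$. Let $\{F_{\alpha}\}_{\alpha\in A}$ be a hereditarily closure-preserving regular closed covering of $X$ in which each $F_{\alpha}$ has property $\mathscr{P}$. First I would form the topological sum $S=\bigoplus_{\alpha\in A}F_{\alpha}$ (a disjoint union of copies of the $F_{\alpha}$, with the sum topology) and define the natural map $f:S\rightarrow X$ whose restriction to the copy of $F_{\alpha}$ is the inclusion $F_{\alpha}\hookrightarrow X$. Since a map out of a topological sum is continuous exactly when each of its restrictions is continuous, $f$ is continuous; since $\{F_{\alpha}\}$ covers $X$, $f$ is onto. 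By hypothesis (1), $S$ has property $\mathscr{P}$, so it will suffice to show that $f$ is both quasi-open and closed: hypothesis (2) then forces $X=f(S)$ to have property $\mathscr{P}$.

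For closedness I would use the HCP hypothesis. A closed set $C\subset S$ splits as $C=\bigcup_{\alpha\in A}C_{\alpha}$, where $C_{\alpha}$ is the trace of $C$ on the copy of $F_{\alpha}$; by the definition of the sum topology each $C_{\alpha}$ is closed in $F_{\alpha}$, hence closed in $X$ because $F_{\alpha}$ is closed in $X$. As a subset of $X$ we have $f(C)=\bigcup_{\alpha\in A}C_{\alpha}$. Choosing $S(F_{\alpha})=C_{\alpha}\subset F_{\alpha}$ for each $\alpha$, the hereditary closure-preservation of $\{F_{\alpha}\}$ says precisely that $\{C_{\alpha}\}_{\alpha\in A}$ is closure-preserving, so $\overline{f(C)}=\bigcup_{\alpha}\overline{C_{\alpha}}=\bigcup_{\alpha}C_{\alpha}=f(C)$. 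Thus $f(C)$ is closed and $f$ is a closed map.

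For quasi-openness I would use that each $F_{\alpha}$ is regular closed, that is $F_{\alpha}=\overline{\mbox{int}(F_{\alpha})}$. Let $U$ be a nonempty open subset of $S$. Then $U$ meets some copy of $F_{\alpha}$ in a nonempty relatively open set $V=W\cap F_{\alpha}$, where $W$ is open in $X$. Since the open set $W$ meets $F_{\alpha}=\overline{\mbox{int}(F_{\alpha})}$, it must already meet $\mbox{int}(F_{\alpha})$, so $W\cap\mbox{int}(F_{\alpha})$ is a nonempty open subset of $X$. Moreover $W\cap\mbox{int}(F_{\alpha})\subset W\cap F_{\alpha}=V\subset f(U)$, whence $\mbox{Int}f(U)\neq\emptyset$; therefore $f$ is quasi-open. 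Assembling these two facts with hypotheses (1) and (2) completes the argument.

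I expect the quasi-openness step to be the main obstacle, since it is the only place where the regular closed assumption is genuinely needed; the crucial point is the observation that an open set meeting $\overline{\mbox{int}(F_{\alpha})}$ must already meet $\mbox{int}(F_{\alpha})$, which is exactly what upgrades relative openness inside $F_{\alpha}$ to a nonempty interior in $X$. (Without regular closedness one could have a summand whose nonempty relatively open sets all have empty interior in $X$, and the conclusion would fail.)
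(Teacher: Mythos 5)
Your proposal is correct and follows essentially the same route as the paper: realize $X$ as the image of the topological sum $\bigoplus_\alpha F_\alpha$ under the natural map, verify closedness from the HCP hypothesis and quasi-openness from regular closedness, then apply the two preservation hypotheses. Your quasi-openness argument (an open set meeting $\overline{\mathrm{int}(F_\alpha)}$ must meet $\mathrm{int}(F_\alpha)$) is just a slightly streamlined version of the paper's computation with $V(x)\cap U_\alpha$, and you spell out the closedness step that the paper leaves as ``easy to see.''
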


\begin{proof}
Let $\{F_{\alpha}\}_{\alpha\in A}$ be a hereditarily
closure-preserving regular closed covering for a space $X$, where
$F_{\alpha}$ has topological property $\mathscr{P}$ for every
$\alpha\in A$. For every $\alpha\in A$, let $F_{\alpha}^{\prime}$
denote a copy of $F_{\alpha}$ and let $f_{\alpha}$ be this
homeomorphism. Put $X^{\ast}$ be the disjoint topological sum of
$F_{\alpha}^{\prime}$, and define a map $f$ from $X^{\ast}$ onto $X$
as follows: for every $x\in X^{\ast}$, if $x\in
F_{\alpha}^{\prime}$, then $f(x)=f_{\alpha}(x)$.

Obviously, $f$ is a map. It follows from (1) that $X^{\ast}$ has
topological property $\mathscr{P}$. It is easy to see that $f$ is a
closed map since $\{F_{\alpha}\}_{\alpha\in A}$ is HCP. Now we only
need to show that $f$ is a quasi-open map. Since $F_{\alpha}$ is a
regular closed set, there is an open subset $U_{\alpha}$ of $X$ such
that $F_{\alpha}=\overline{U_{\alpha}}$. Clearly, it is sufficient
to show that $\mbox{int}f(E)\neq\emptyset$ for each non-empty open
subset $E$ in $F_{\alpha}^{\prime}$. Since $f_{\alpha}:
F_{\alpha}^{\prime}\rightarrow \overline{U_{\alpha}}$ is a
homeomorphism map, $f_{\alpha}(E)$ is open in
$\overline{U_{\alpha}}$, and therefore, there exists an open subset
$U$ in $X$ such that $f_{\alpha}(E)=U\cap\overline{U_{\alpha}}$.
Choose a point $x\in f_{\alpha}(E)$. Then there is an open subset
$V(x)$ of $X$ such that $x\in V(x)\subset U$. Since $x\in
f_{\alpha}(E)\subset \overline{U_{\alpha}}$, $V(x)\cap
U_{\alpha}\neq\emptyset$. Then $V(x)\cap U_{\alpha}\subset
f_{\alpha}(E)$, and hence $\mbox{int}f_{\alpha}(E)\neq\emptyset$.
Since $E\subset F_{\alpha}^{\prime}$, $f(E)=f_{\alpha}(E)$. Then $f$
is quasi-open. Therefore, $X$ has topological property $\mathscr{P}$
by (2).
\end{proof}

\begin{theorem}\label{t12}
$\pi$-metrizability satisfies hereditarily closure-preserving
regular closed sum theorems.
\end{theorem}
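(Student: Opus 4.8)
The plan is to invoke Lemma~\ref{l2} with $\mathscr{P}$ taken to be $\pi$-metrizability, which reduces the theorem to verifying the two hypotheses of that lemma. Hypothesis~(2), that $\pi$-metrizability is preserved by quasi-open and closed maps, is precisely Theorem~\ref{t11}, so nothing further is needed there. Hence the entire content of the proof lies in checking hypothesis~(1): that $\pi$-metrizability is preserved by topological sums.

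To establish this, I would start from a family $\{X_{\alpha}\}_{\alpha\in A}$ of $\pi$-metrizable spaces and fix for each $\alpha$ a $\sigma$-discrete $\pi$-base $\mathcal{P}^{\alpha}=\bigcup_{n\in\mathbb{N}}\mathcal{P}^{\alpha}_{n}$, where each $\mathcal{P}^{\alpha}_{n}$ is discrete in $X_{\alpha}$. In the topological sum $X=\bigoplus_{\alpha\in A}X_{\alpha}$ every summand $X_{\alpha}$ is clopen, and each member of $\mathcal{P}^{\alpha}_{n}$ is a subset of $X_{\alpha}$. Setting $\mathcal{Q}_{n}=\bigcup_{\alpha\in A}\mathcal{P}^{\alpha}_{n}$ for each $n$, I would claim that $\mathcal{Q}=\bigcup_{n\in\mathbb{N}}\mathcal{Q}_{n}$ is a $\sigma$-discrete $\pi$-base for $X$; note that the $\sigma$-structure comes only from the union over $n\in\mathbb{N}$, so this works even when $A$ is uncountable.

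The verification splits into discreteness and the $\pi$-base property. For discreteness of $\mathcal{Q}_{n}$, given $x\in X$ there is a unique $\beta$ with $x\in X_{\beta}$; since $X_{\beta}$ is open and disjoint from every other summand, it meets no member of $\mathcal{P}^{\alpha}_{n}$ for $\alpha\neq\beta$, so within the neighborhood $X_{\beta}$ only the members of the discrete family $\mathcal{P}^{\beta}_{n}$ can appear, and a smaller neighborhood of $x$ inside $X_{\beta}$ meets at most one of them. For the $\pi$-base property, any nonempty open $O\subset X$ meets some summand in a nonempty open set $O\cap X_{\beta}$, whence there is $P\in\mathcal{P}^{\beta}$ with $P\subset O\cap X_{\beta}\subset O$.

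I do not anticipate a genuine obstacle: one hypothesis of Lemma~\ref{l2} is already furnished by Theorem~\ref{t11}, and the other reduces to the essentially formal observation that the summands are clopen, which makes the level-wise union of the discrete $\pi$-bases again $\sigma$-discrete and a $\pi$-base. The only point requiring mild care is confirming that discreteness survives the amalgamation across the (possibly uncountable) index set $A$, and this is exactly where the clopenness of the summands is used. Once hypothesis~(1) is in hand, Lemma~\ref{l2} immediately yields the theorem.
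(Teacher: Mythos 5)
Your proposal is correct and follows exactly the paper's route: apply Lemma~\ref{l2} with hypothesis~(2) supplied by Theorem~\ref{t11} and hypothesis~(1) being preservation under topological sums, which the paper simply asserts as ``easy to prove'' and which you verify correctly by amalgamating the level-$n$ discrete families across the clopen summands.
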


\begin{proof}
It is easy to prove that $\pi$-metrizability is preserved by
topological sums. Since $\pi$-metrizability is preserved by
quasi-open and closed maps, $\pi$-metrizability satisfies locally
finite regular closed sum theorem by Lemma~\ref{l2}.
\end{proof}

It is well known that a space $X$ is metrizable if and only if $X$
is paracompact and locally metrizable. However, there exists a
$\pi$-metrizable space such that $X$ is non-paracompact. But we have
the following Theorem~\ref{t10}.

A space $X$ is called {\it almost $\sigma$-paracompact} if, for each
open covering $\mathcal{U}$ of $X$, there is a $\sigma$-locally
finite open collection $\mathcal{V}$ such that $\mathcal{V}$ refines
$\mathcal{U}$ and $\cup\mathcal{V}$ is dense in $X$. Obviously,
paracompact or $\pi$-metrizable spaces are almost
$\sigma$-paracompact.

\begin{theorem}\label{t10}
A space $X$ is $\pi$-metrizable if and only if $X$ is almost
$\sigma$-paracompact and locally $\pi$-metrizable.
\end{theorem}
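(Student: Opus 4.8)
The plan is to prove both implications, with essentially all the work in the sufficiency. For the necessity, if $X$ is $\pi$-metrizable then it is almost $\sigma$-paracompact (as already observed just before the statement) and it is trivially locally $\pi$-metrizable, since $X$ itself is a $\pi$-metrizable open neighbourhood of each of its points.

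For the sufficiency I would first record an auxiliary fact: a regular closed subspace of a $\pi$-metrizable space is again $\pi$-metrizable, and moreover its $\pi$-base may be taken in a convenient form. Indeed, if $Y$ has a $\sigma$-locally finite $\pi$-base $\mathcal{P}=\bigcup_n\mathcal{P}_n$ by Theorem~\ref{t0}, and $F=\overline{W}$ with $W$ open in $Y$, then $\{P\cap W:P\in\mathcal{P},\ P\cap W\neq\emptyset\}$ is a $\sigma$-locally finite $\pi$-base for $F$: density of $W$ in $F$ makes it a $\pi$-base, and intersecting with $W$ keeps each layer locally finite. The members of this $\pi$-base are open in $Y$ and contained in $W\subset F$. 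This is the crucial form, because a family of subsets of the closed set $F$ that is locally finite in the subspace $F$ is automatically locally finite in the ambient space (a point outside $F$ has the open neighbourhood $X\setminus F$ meeting no member, and at a point of $F$ a relatively open witness extends to an ambient open witness since the members lie in $F$).

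Now assume $X$ is almost $\sigma$-paracompact and locally $\pi$-metrizable. For each $x$ I would choose a $\pi$-metrizable open neighbourhood $U_x$, and by regularity an open $W_x$ with $x\in W_x\subset\overline{W_x}\subset U_x$. Applying almost $\sigma$-paracompactness to the cover $\{W_x\}$ yields a $\sigma$-locally finite open family $\mathcal{V}=\bigcup_n\mathcal{V}_n$ refining $\{W_x\}$ with $\bigcup\mathcal{V}$ dense in $X$. For each $V\in\mathcal{V}$ we have $\overline{V}\subset\overline{W_x}\subset U_x$ for some $x$, so $\overline{V}$ is a regular closed subspace of the $\pi$-metrizable open set $U_x$ (here open in $U_x$ means open in $X$). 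By the auxiliary fact I obtain a $\sigma$-locally finite $\pi$-base $\mathcal{C}_V=\bigcup_m\mathcal{C}_{V,m}$ of $\overline{V}$ whose members are open in $X$ and contained in $V$, with each $\mathcal{C}_{V,m}$ locally finite in $\overline{V}$ and hence in $X$.

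Finally I would set $\mathcal{D}_{n,m}=\bigcup_{V\in\mathcal{V}_n}\mathcal{C}_{V,m}$ and $\mathcal{D}=\bigcup_{n,m}\mathcal{D}_{n,m}$, and verify that $\mathcal{D}$ is the desired $\sigma$-locally finite $\pi$-base, whence $X$ is $\pi$-metrizable by Theorem~\ref{t0}. That $\mathcal{D}$ is a $\pi$-base follows from density of $\bigcup\mathcal{V}$: any nonempty open $O$ meets some $V$, so $O\cap V$ is a nonempty relatively open subset of $\overline{V}$ and therefore contains some member of $\mathcal{C}_V$, which lies in $\mathcal{D}$ and is contained in $O$. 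The main obstacle, and the point the whole argument is built around, is the $\sigma$-local finiteness of $\mathcal{D}$: the naive union of $\pi$-bases of the open pieces $V$ need not be locally finite, because members may accumulate at boundary points of $V$. This is exactly why I pass to the closures $\overline{V}$ and use $\pi$-bases locally finite in $\overline{V}$. Then at a point $p$ I use local finiteness of $\mathcal{V}_n$ to find a neighbourhood meeting only finitely many $V\in\mathcal{V}_n$; pieces $V'$ missing this neighbourhood contribute nothing since their members lie in $V'$, and for each of the finitely many relevant $V$ the family $\mathcal{C}_{V,m}$ is locally finite at $p$ in $X$, so a common smaller neighbourhood meets only finitely many members of $\mathcal{D}_{n,m}$.
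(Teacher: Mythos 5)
Your proposal is correct and follows essentially the same route as the paper: refine the cover of $\pi$-metrizable neighbourhoods by a $\sigma$-locally finite open family with dense union, pass to the closures $\overline{V}$ (which are $\pi$-metrizable as regular closed subspaces of $\pi$-metrizable open sets), take $\pi$-bases of these closures with members contained in $V$, and combine. Your inline justification of the auxiliary fact (that closures of open subspaces inherit $\pi$-metrizability, with members of the $\pi$-base open in the ambient space) is a point the paper merely asserts, but the overall argument is the same.
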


\begin{proof}
Obviously, we only need to show the sufficiency.

Let $X$ be almost $\sigma$-paracompact and locally $\pi$-metrizable.
For each $x\in X$, there exists an open neighborhood $V_{x}$ of $x$
such that $V_{x}$ is $\pi$-metrizable. Then $\{V_{x}: x\in X\}$ is
an open covering for $X$. Since $X$ is almost $\sigma$-paracompact,
there exists a $\sigma$-locally finite open collection
$\mathcal{V}$ refining $\{V_{x}: x\in X\}$ and $\cup\mathcal{V}$ is
dense in $X$. We denote $\mathcal{V}$ by $\mathcal{V}=\bigcup_{m\in
\mathbb{N}}\mathcal{V}_{m}$. By the regularity, we can assume that
$\overline{\mathcal{V}}=\{\overline{V}: V\in \mathcal{V}\}$ refines
$\{V_{x}: x\in X\}$. Obviously, $\overline{\mathcal{V}}$ is
$\sigma$-locally finite. Fix an $m\in \mathbb{N}$. For each
$\overline{V}\in \overline{\mathcal{V}_{m}}$, since
$\pi$-metrizability is preserved by the closure of open subspaces,
$\overline{V}$ is $\pi$-metrizable, and therefore, let
$\mathcal{P}(\overline{V})=\bigcup_{n\in
\mathbb{N}}\mathcal{P}_{mn}(\overline{V})$ be a $\sigma$-discrete
$\pi$-base for $\overline{V}$, where $\mathcal{P}_{n}(\overline{V})$
is discrete in $\overline{V}$ for each $n\in \mathbb{N}$. In fact,
for each $\overline{V}\in \overline{\mathcal{V}}$ and
$W\in\mathcal{P}(\overline{V})$, we can also assume that $W\subset
V$. Put
$\mathcal{P}_{mn}=\bigcup_{\overline{V}\in\overline{\mathcal{V}}}\mathcal{P}_{mn}(\overline{V})$
and $\mathcal{P}=\bigcup_{m, n\in\mathbb{N}}\mathcal{P}_{mn}$. Then
$\mathcal{P}$ is a $\sigma$-locally finite $\pi$-base for $X$.
Firstly, $\mathcal{P}$ is a $\pi$-base for $X$. In fact, let $U$ be
a nonempty open subset for $X$. Since $\cup\mathcal{V}$ is dense in
$X$, there is an $V\in \mathcal{V}$ such that $U\cap
V\neq\emptyset$. It follows from $W\subset V$ for each
$W\in\mathcal{P}(\overline{V})$ that there exists a
$W\in\mathcal{P}(\overline{V})$ such that $W\subset U\cap V$. Now,
we show that $\mathcal{P}_{mn}$ is locally finite for each $m, n\in
\mathbb{N}$. For each $x\in X$, since $\overline{\mathcal{V}_{m}}$
is locally finite, there exists an open neighborhood $U(x)$ of $x$
such that $U(x)$ intersects only finitely many elements of
$\overline{\mathcal{V}_{m}}$, We denote those finitely many elements
by $\overline{V}_{1}, \cdots, \overline{V}_{k}$. Then we need only
to find an open neighborhood $G$ of $x$ such that $G$ intersects
only finitely many elements of
$\bigcup_{i=1}^{k}\mathcal{P}_{mn}(\overline{V_{i}})$. Clearly,
$\mathcal{P}_{mn}(\overline{V_{i}})$ is locally finite in $X$ for
each $1\leq i\leq k$. For each $1\leq i\leq k$, there exists an open
subset $U_{i}$ with $x\in U_{i}$ such that $U_{i}$ intersects only
finitely many elements of $\mathcal{P}_{mn}(\overline{V_{i}})$. Let
$G=U(x)\cap (\bigcap_{i=1}^{k}U_{i})$. Clearly, $G$ is an open
neighborhood of $x$ and intersects only finitely many elements of
$\bigcup_{i=1}^{k}\mathcal{P}_{mn}(\overline{V}_{i})$.
\end{proof}

{\bf Remark} (1) We can not omit the condition ``$X$ is almost
$\sigma$-paracompact'' in Theorem~\ref{t10}. Indeed,
Isbell-Mr\'{o}wka space $\psi(D)$~\cite{MS} is locally
$\pi$-metrizable and non-$\pi$-metrizable, where $D$ is a discrete
space with $|D|=\aleph_{1}$. However, it is easy to see that
$\psi(D)$ is not an almost $\sigma$-paracompact space.

(2) We can not replace ``$X$ is almost $\sigma$-paracompact'' by
``$X$ is almost paracompact'' in Theorem~\ref{t10}, where a space is
called {\it almost paracompact}~\cite{sm} if, for each open covering
$\mathcal{U}$, there is a locally finite open collection
$\mathcal{V}$ such that $\mathcal{V}$ refines $\mathcal{U}$ and
$\cup\mathcal{V}$ is dense in $X$. In fact, Isbell-Mr\'{o}wka space
$\psi(\mathbb{\mathbb{N}})$~\cite{MS} is a $\pi$-metrizable space
and non-almost paracompact.

Next, we discuss the second-countable $\pi$-metrizable spaces.

It is clear that second-countable $\pi$-metrizability is preserved
by open subspaces, closures of open subspaces, and dense subspaces.
As countability, let $X$ be a $\pi$-metrizable space. Then $X$ is a
second-countable $\pi$-metrizable space if $X$ satisfies one of the
following conditions:
\begin{enumerate}
\item $X$ is separable;

\item $X$ is Lindel$\ddot{\mbox{o}}$f;

\item $X$ is pseudocompact.
\end{enumerate}

{\bf Remark} It is well known, for a metrizable space $X$, that $X$
is separable if and only if $X$ is Lindel$\ddot{\mbox{o}}$f.
However, there is a separable and $\pi$-metrizable space, which is
not a Lindel$\ddot{\mbox{o}}$f space, for example, Isbell-Mr\'{o}wka
space $\psi(\mathbb{\mathbb{N}})$~\cite{MS}.

The following result is easy to see.

\begin{proposition}
Second-countable $\pi$-metrizability is preserved by quasi-open
maps.
\end{proposition}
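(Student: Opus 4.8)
The plan is to take the image of a countable $\pi$-base under $f$ and apply the interior operator. Let $f:X\rightarrow Y$ be a quasi-open map, where $X$ is a second-countable $\pi$-metrizable space, and fix a countable $\pi$-base $\mathcal{U}=\{U_{n}:n\in \mathbb{N}\}$ of $X$. The natural candidate for a countable $\pi$-base of $Y$ is
$\mathcal{V}=\{\mbox{Int}f(U_{n}):n\in \mathbb{N}\}$, which is obviously countable, so the whole task reduces to checking that $\mathcal{V}$ is a $\pi$-base consisting of nonempty open sets.

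First I would check that each member of $\mathcal{V}$ is a nonempty open set. Openness is immediate, since each $\mbox{Int}f(U_{n})$ is an interior. For nonemptiness, note that $U_{n}$ is a nonempty open subset of $X$ and $f$ is quasi-open, so $\mbox{Int}f(U_{n})\neq\emptyset$ by the definition of quasi-open map. Next I would verify the $\pi$-base property. Let $O$ be an arbitrary nonempty open subset of $Y$. Because $f$ is continuous and onto, $f^{-1}(O)$ is a nonempty open subset of $X$, and hence there is some $n\in \mathbb{N}$ with $U_{n}\subset f^{-1}(O)$ since $\mathcal{U}$ is a $\pi$-base of $X$. Applying $f$ yields $f(U_{n})\subset f(f^{-1}(O))\subset O$, and as $O$ is open we obtain $\mbox{Int}f(U_{n})\subset f(U_{n})\subset O$. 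Thus $\mathcal{V}$ is a countable $\pi$-base of $Y$, and therefore $Y$ is second-countable $\pi$-metrizable.

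The argument is essentially routine, and I do not expect a serious obstacle; the only point requiring care is to isolate which hypothesis is used where. Quasi-openness is exactly what guarantees that the images $f(U_{n})$ have nonempty interior, so that $\mathcal{V}$ genuinely consists of nonempty $\pi$-base candidates rather than possibly-empty interiors. Surjectivity of $f$ is what ensures $f^{-1}(O)\neq\emptyset$, and continuity is what makes $f^{-1}(O)$ open; these two properties together are what allow the $\pi$-base of $X$ to be pulled back through $f^{-1}$ and then pushed forward again. Keeping this bookkeeping straight is the entire content of the proof.
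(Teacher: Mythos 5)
Your proof is correct and is exactly the argument the paper intends: the paper states this proposition without proof (``the following result is easy to see''), and your construction $\{\mbox{Int}f(U_{n})\}$ is the same device used in the paper's proof of Theorem~\ref{t11} for the $\sigma$-HCP case. No gaps; the bookkeeping of where quasi-openness, continuity, and surjectivity are used is accurate.
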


However, there exists a non-$\pi$-metrizable space, which is the
image of a second-countable $\pi$-metrizable space under a closed
and at most two-to-one map, see Example~\ref{e0}.

\begin{theorem}\label{t1}
A space $Y$ is the image of a second-countable $\pi$-metrizable
space $X$ under a closed and at most two-to-one map if and only if
$Y$ is separable.
\end{theorem}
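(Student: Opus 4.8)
The plan is to prove the two implications separately; the forward implication is routine, and the content lies in the reverse one, which requires an explicit construction.

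For necessity, suppose $f:X\to Y$ is the given map and $X$ has a countable $\pi$-base $\{U_{n}:n\in\mathbb{N}\}$. First I would note that any space with a countable $\pi$-base is separable: choosing $x_{n}\in U_{n}$ gives a countable set meeting every nonempty open set (since each such set contains some $U_{n}$, hence $x_{n}$), so $\{x_{n}:n\in\mathbb{N}\}$ is dense. Then, since $f$ is a continuous surjection and the image of a dense set under a continuous surjection is dense, $Y$ is separable. Notice this direction uses neither the closedness of $f$ nor the bound on the fibers.

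For sufficiency, given a separable $Y$ with a countable dense subset $D=\{d_{n}:n\in\mathbb{N}\}$ (a set of distinct points), I would build $X$ as the Alexandroff-duplicate-type space obtained by doubling only the points of $D$: set $X=(Y\times\{0\})\cup(D\times\{1\})$, declare each $(d,1)$ to be isolated, and give each $(y,0)$ the basic neighbourhoods $N(U,y)=(U\times\{0\})\cup(((U\cap D)\setminus\{y\})\times\{1\})$ for $U$ open in $Y$ with $y\in U$. Define $f(y,i)=y$. Then $f$ is continuous and onto, and it is at most two-to-one, since $f^{-1}(y)=\{(y,0)\}$ for $y\notin D$ and $f^{-1}(d)=\{(d,0),(d,1)\}$ for $d\in D$.

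The verification then splits into three checks. That $X$ is $T_{1}$ and regular rests on two features of the construction: each $(d,1)$ is simultaneously isolated and closed, because $N(U,y)$ deliberately omits $(y,1)$; and a neighbourhood $N(V,y)$ can be shrunk using $V\setminus\{y\}$ (open since $Y$ is $T_{1}$) so that its closure stays inside a prescribed $N(U,y)$ with $\overline{V}\subset U$, whence regularity of $X$ descends from that of $Y$. That $X$ is second-countable $\pi$-metrizable follows by exhibiting a countable dense set $J$ of isolated points, namely $D\times\{1\}$ together with $\{(y,0):y\ \text{isolated in}\ Y\}$; this is countable because the isolated points of $Y$ lie in every dense set and hence in $D$, and it is dense because every nonempty basic open set either is a singleton isolated point or is some $N(U,y)$ with $U\cap D\setminus\{y\}\neq\emptyset$, so $\{\{p\}:p\in J\}$ is a countable $\pi$-base. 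Finally, for closedness of $f$, given a closed $C\subset X$ and $y\notin f(C)$, both $(y,0)$ and, if $y\in D$, $(y,1)$ avoid $C$, so some $N(U,y)$ is disjoint from $C$, and from this one checks directly that $U\cap f(C)=\emptyset$.

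The main obstacle, which dictates the shape of the construction, is the tension between $f$ being closed and $X$ being $T_{1}$. Sending an isolated point onto a point of the dense set tends to destroy closedness, since the image of a closed discrete set of duplicates would be an arbitrary, possibly non-closed, subset of $D$; yet forcing duplicates into the neighbourhoods of their base points to repair closedness tends to destroy $T_{1}$, since the base point would then lie in the closure of its own duplicate. Doubling only the countable dense set $D$, and keeping the nearby points $(d,1)$ in $N(U,y)$ while excluding $(y,1)$ itself, is exactly what reconciles the two requirements, and I expect the closedness check at the coordinate $z=y$ to be the delicate point of the argument.
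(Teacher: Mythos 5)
Your proof is correct, but the sufficiency construction is genuinely different from the paper's. The paper realizes $X$ as the closed subspace $\{(n,d_{n}):n\in\mathbb{N}\}\cup(\{p\}\times Y)$ of $\mathbb{N}_{\ast}\times Y$, where $\mathbb{N}_{\ast}$ is the one-point compactification of $\mathbb{N}$; the whole point of that choice is that closedness of $f$ comes for free (projection along a compact factor is closed, and $X$ is closed in the product), and regularity and $T_{1}$ are inherited from the product, so the only work is exhibiting the countable $\pi$-base of isolated points. Your ``half Alexandroff duplicate'' (doubling only the points of $D$, with $(y,1)$ excluded from $N(U,y)$) achieves the same fiber structure but forces you to verify the topology axioms, $T_{1}$, regularity, and closedness by hand; you correctly identify that excluding $(y,1)$ from $N(U,y)$ is what simultaneously keeps $X$ $T_{1}$ and makes the closedness check go through at $z=y$ (there $(y,1)\notin C$ because $y\notin f(C)$), and the shrinking $V\setminus\{y\}$ is also what makes the neighbourhood assignment a legitimate base. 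One small point to tighten: your dichotomy ``every basic open set is a singleton or some $N(U,y)$ with $(U\cap D)\setminus\{y\}\neq\emptyset$'' silently uses that when $(U\cap D)\setminus\{y\}=\emptyset$, density of $D$ forces $U=\{y\}$ with $y$ isolated in $Y$, so that $N(U,y)=\{(y,0)\}$ is itself a singleton from your set $J$ --- that is precisely why $J$ must include the isolated points of $Y$ at level $0$, and your $J$ does. In exchange for the extra verification, your construction avoids the compactification entirely and is arguably more elementary; both yield a space whose isolated points are dense and whose singletons form the countable $\pi$-base.
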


\begin{proof}
Necessity. Since a second-countable $\pi$-metrizable space is
separable, $Y$ is separable.

Sufficiency. If $Y$ is finite, it is obvious. Hence we can assume
that $Y$ is infinite. Let $\{d_{n}: n\in \mathbb{N}\}$ be a
countable dense subset for $Y$, where $d_{n}\neq d_{m}$ for distinct
$n, m\in \mathbb{N}$. Let $X=\{(n, d_{n}): n\in
\mathbb{N}\}\cup (\{p\}\times Y)$ and endow $X$ with the subspace
topology of $\mathbb{N}_{\ast}\times Y$, where
$\mathbb{N}_{\ast}=\mathbb{N}\cup\{p\}$ is the Alexandroff
compactification of $\mathbb{N}$.

Claim: $X$ is second-countable $\pi$-metrizable.

Let $\mathcal{P}_{n}=\{(n, d_{n})\}$ and $\mathcal{B}_{n}=\{(p,
d_{n}): \{d_{n}\}\in\tau (Y)\}$ for each $n\in \mathbb{N}$.
Obviously, $\mathcal{P}_{n}$ and $\mathcal{B}_{n}$ are discrete for
each $n\in \mathbb{N}$, where $\mathcal{B}_{n}=\emptyset$ if
$\{d_{n}\}\not\in\tau (Y)$. Then $\bigcup_{n\in
\mathbb{N}}(\mathcal{P}_{n}\cup\mathcal{B}_{n})$ is a $\pi$-base for
$X$. Indeed, let $O$ be a nonempty open subset of $X$. Then there
exist an $m\in \mathbb{N}$ and an open subset $U$ of $Y$ such that
$O=((\mathbb{N}_{\ast}\setminus\{1, 2, \cdots, m-1\})\times U)\cap
X$. Obviously, we only need to prove that $O\cap\{(n, d_{n}): n\in
\mathbb{N}\}\neq\emptyset$ or $O\cap L\neq\emptyset$, where $L=\{(p,
d_{n}): \{d_{n}\}\in \tau (Y)\}$. If $O\cap (\{p\}\times
Y)=\emptyset$, then it is obvious. Therefore, we can assume that
$O\cap (\{p\}\times Y)\neq\emptyset$. Suppose that $O\cap\{(n,
d_{n}): n\in \mathbb{N}\}=\emptyset$. Then $O\subset \{p\}\times Y$.
Since $U$ is open in $Y$, there exists an $n\in \mathbb{N}$ such
that $d_{n}\in U$. Assume that $O\cap L=\emptyset$. Then $(n,
d_{n})\in O$ if $n\geq m$, this is a contradiction. Hence $n< m$.
Since $U\setminus\{d_{1}, d_{2}, \cdots, d_{m-1}\}\neq\emptyset$,
there is an $n_{0}\geq m$ such that $d_{n_{0}}\in U$. Therefore,
$(n_{0}, d_{n_{0}})\in O$, this is a contradiction. Hence $O\cap
L\neq\emptyset$. Then there exists a $k\in \mathbb{N}$ such that
$(p, d_{k})\in\mathcal{B}_{k}$ and $(p, d_{k})\in O\cap L$.

Let $f:X\rightarrow Y$ be the natural projection map. Since
$\mathbb{N}_{\ast}$ is compact, the projection of
$\mathbb{N}_{\ast}\times Y$ onto $Y$ is a closed map. It follows
from $X$ is a closed subspace of $\mathbb{N}_{\ast}\times Y$ that
$f$ is a closed map. Obviously, for each $y\in Y$, $f^{-1}(y)$ is at
most two points set. Hence $f$ is a closed and at most two-to-one
map.
\end{proof}

\begin{corollary}
A space $Y$ is the image of a second-countable $\pi$-metrizable
space $X$ if and only if $Y$ is separable.
\end{corollary}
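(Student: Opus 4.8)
The plan is to obtain both directions directly from Theorem~\ref{t1}, using only the elementary fact that separability is preserved by continuous surjections. Since the paper's standing convention is that every map is continuous and onto, the word ``image'' here means precisely a continuous surjective image, so this corollary is just Theorem~\ref{t1} with the closedness and at-most-two-to-one hypotheses deleted.

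For the sufficiency I would argue that no new construction is needed. If $Y$ is separable, then Theorem~\ref{t1} already supplies a second-countable $\pi$-metrizable space $X$ together with a closed, at most two-to-one map of $X$ onto $Y$. Such a map is in particular a map in the unqualified sense, so $Y$ is the image of a second-countable $\pi$-metrizable space, as required.

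For the necessity, suppose $Y=f(X)$ with $X$ second-countable $\pi$-metrizable. First I would note that $X$ is separable: choosing a point $x_{n}$ from each member of a countable $\pi$-base $\{U_{n}:n\in\mathbb{N}\}$ produces a countable set $\{x_{n}:n\in\mathbb{N}\}$ meeting every nonempty open subset of $X$ (each such subset contains some $U_{n}$), hence a countable dense set. Since the continuous image of a separable space is separable, $Y=f(X)$ is separable.

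The only point requiring care is the reading of ``image'': because the convention fixes maps to be continuous and onto, the sufficiency is immediate from Theorem~\ref{t1}, while the necessity needs only that this broader, unqualified class of maps still preserves separability, which it does. There is therefore no real obstacle here; the substance is carried entirely by Theorem~\ref{t1} and the standard separability argument.
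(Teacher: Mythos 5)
Your proof is correct and matches the paper's intent: the corollary is stated without proof as an immediate consequence of Theorem~\ref{t1}, with sufficiency given by the map constructed there and necessity by the standard observation that a second-countable $\pi$-metrizable space is separable and continuous surjections preserve separability. No issues.
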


\begin{example}\label{e0}
There exists a regular and separable space $X$, which is not a
$\pi$-metrizable space. Therefore, closed and at most two-to-one
maps don't preserve $\pi$-metrizability by Theorem~\ref{t1}.
\end{example}

\begin{proof}
Suppose that $\mathbb{I}=[0, 1]$ is the closed unit interval with a
subspace of the usual topology $\mathbb{R}$, and
$X=\mathbb{I}^{\mathbb{I}}$ with the product topology. Then $X$ is a
regular and separable space. However, $X$ is not a $\pi$-metrizable
space by \cite[Theorem 3.11]{sd}.
\end{proof}

\medskip
\section{Strongly $\pi$-metrizable spaces}
\begin{definition}
Let $\mathcal{P}$ be a collection of open subsets of $X$.
$\mathcal{P}$ is called a {\it strong $\pi$-base}~\cite{AV} if
$\mathcal{P}=\bigcup_{x\in X}\mathcal{P}_{x}$ and, for each $x\in
X$, $\mathcal{P}_{x}$ is a strong $\pi$-base at point $x$, that is,
$\mathcal{P}_{x}$ is a $\pi$-base at point $x$ and every open
neighborhood of $x$ contains all but finitely many elements of
$\mathcal{P}_{x}$.

$X$ is called {\it strongly $\pi$-metrizable} if $X$ has a
$\sigma$-discrete strong $\pi$-base. $X$ is called {\it
second-countable strongly $\pi$-metrizable} if $X$ has a countably
strong $\pi$-base.
\end{definition}

It is obvious that every metrizable space is strongly
$\pi$-metrizable, and every strongly $\pi$-metrizable space is
$\pi$-metrizable. The implications of the converses are not true.

(1) Isbell-Mr\'{o}wka space $\psi(\mathbb{N})$~\cite{MS} is a
strongly $\pi$-metrizable space, but it is not a metrizable space;

(2) Let $K$ be a discrete space with $|K|=\aleph_{1}$.
$K^{\aleph_{1}}$ is $\pi$-metrizable by \cite{sd}, and however,
$K^{\aleph_{1}}$ is a non-strongly $\pi$-metrizable space by the
following Theorem~\ref{t5}.

Clearly, if $\mathcal{P}=\bigcup_{x\in X}\mathcal{P}_{x}$ is a
strong $\pi$-base for $X$, then every infinite subfamily of
$\mathcal{P}_{x}$ is a strong $\pi$-base at point $x$. Therefore, we
have the following result.

\begin{proposition}\label{t3}
If $X$ has a strong $\pi$-base, then every point of $X$ has a
countably strong $\pi$-base.
\end{proposition}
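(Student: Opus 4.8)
The plan is to deduce the statement directly from the observation recorded just above it, namely that any infinite subfamily of a strong $\pi$-base at a point is again a strong $\pi$-base at that point. First I would fix an arbitrary point $x\in X$ and consider the family $\mathcal{P}_{x}$ coming from the decomposition $\mathcal{P}=\bigcup_{y\in X}\mathcal{P}_{y}$ of the given strong $\pi$-base; by hypothesis $\mathcal{P}_{x}$ is a strong $\pi$-base at $x$, so it remains only to produce from it a \emph{countable} strong $\pi$-base at $x$.

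I would then split into two cases according to the cardinality of $\mathcal{P}_{x}$. If $\mathcal{P}_{x}$ is at most countable, there is nothing to do: $\mathcal{P}_{x}$ itself is already a countable strong $\pi$-base at $x$. If $\mathcal{P}_{x}$ is uncountable, and in particular infinite, I would choose any countably infinite subfamily $\mathcal{Q}\subset\mathcal{P}_{x}$. By the remark preceding the statement, $\mathcal{Q}$ is a strong $\pi$-base at $x$, and since it is countable this finishes the proof.

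The only point worth spelling out is why that remark holds, since the whole argument rests on it. The ``all but finitely many'' clause in the definition of a strong $\pi$-base at a point is obviously inherited by every subfamily; and when the subfamily $\mathcal{Q}$ is infinite, this clause forces every open neighborhood of $x$ to contain infinitely many---hence at least one---member of $\mathcal{Q}$, so that $\mathcal{Q}$ is a $\pi$-base at $x$ as well. Thus $\mathcal{Q}$ satisfies both defining conditions. There is no genuine obstacle here: the content of the proposition is entirely captured by the stability of the strong-$\pi$-base-at-a-point property under passage to infinite subfamilies, and the cardinality dichotomy above turns that stability into the desired countability.
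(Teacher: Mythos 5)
Your proposal is correct and follows exactly the paper's own route: the paper proves the proposition by the same observation that every infinite subfamily of $\mathcal{P}_{x}$ remains a strong $\pi$-base at $x$, so a countably infinite subfamily suffices when $\mathcal{P}_{x}$ is uncountable. You merely spell out the justification of that observation in more detail than the paper does, which is harmless.
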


In \cite{sd}, D. Stover given a non-metrizable topological group,
which is $\pi$-metrizable. However, we have the following result by
Theorem~\ref{t3}.

\begin{theorem}\label{t6}
If $X$ is a topological group with a strong $\pi$-base, then $X$ is
metrizable.
\end{theorem}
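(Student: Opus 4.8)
The plan is to reduce metrizability to first countability at a single point and then invoke the classical Birkhoff--Kakutani theorem, which says that a $T_{1}$ (hence Hausdorff) topological group is metrizable as soon as its identity element has a countable neighbourhood base. Since a topological group is homogeneous, it suffices to produce a countable base of neighbourhoods at the identity $e$, and the whole work will be to extract such a base from the strong $\pi$-base.

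First I would dispose of the degenerate case. If $e$ is isolated, then by homogeneity $X$ is discrete and trivially metrizable, so I may assume $e$ is not isolated. By Proposition~\ref{t3}, $e$ has a countable strong $\pi$-base $\mathcal{P}_{e}=\{P_{n}:n\in\mathbb{N}\}$, and this family must be infinite: were it finite, a short $T_{1}$ argument (for each $i$ pick, when possible, a point $y_{i}\in P_{i}\setminus\{e\}$ and a neighbourhood of $e$ missing $y_{i}$; intersect these finitely many neighbourhoods) would force some $P_{i}\subset\bigcap\{U:e\in U\ \text{open}\}=\{e\}$, making $e$ isolated. For each $n$ I would choose a point $a_{n}\in P_{n}$ and set $Q_{n}=a_{n}^{-1}P_{n}$; each $Q_{n}$ is open and contains $e=a_{n}^{-1}a_{n}$, so it is a genuine open neighbourhood of $e$.

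The central step is to check that $\{Q_{n}:n\in\mathbb{N}\}$ is a neighbourhood base at $e$. Given an open neighbourhood $U$ of $e$, continuity of the group operations yields a symmetric open neighbourhood $V$ of $e$ with $VV\subset U$. By the defining property of a strong $\pi$-base, the neighbourhood $V$ contains all but finitely many members of $\mathcal{P}_{e}$, so I may fix $n$ with $P_{n}\subset V$. Then $a_{n}\in P_{n}\subset V$, whence $a_{n}^{-1}\in V^{-1}=V$, and therefore $Q_{n}=a_{n}^{-1}P_{n}\subset V\cdot V\subset U$. Thus every neighbourhood of $e$ contains some $Q_{n}$, so $e$ has countable character, and the theorem follows from homogeneity together with Birkhoff--Kakutani.

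The crux — and the one point I expect to require genuine care — is the translation trick of the previous paragraph. The members $P_{n}$ of a $\pi$-base need not contain $e$ at all, so they are not themselves neighbourhoods of $e$; it is precisely the left translates $a_{n}^{-1}P_{n}$ that convert them into neighbourhoods, while the symmetric-square neighbourhood $V$ with $VV\subset U$ is what simultaneously absorbs the translating element $a_{n}^{-1}$ and the whole set $P_{n}$. The only auxiliary subtlety is ensuring that $\mathcal{P}_{e}$ can be taken infinite, so that the ``all but finitely many'' clause has real content; this is exactly what the reduction to the non-isolated case secures.
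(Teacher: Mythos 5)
Your proof is correct and follows essentially the same route as the paper: reduce to countable $\pi$-character at a point via Proposition~\ref{t3}, then use the fact that in a topological group $\pi$-character bounds character, plus Birkhoff--Kakutani. The paper simply cites these last two facts from the Handbook (\cite[Theorems 3.6 and 1.8]{CW}), whereas you supply the standard translation-trick proof inline; the argument is sound.
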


\begin{proof}
Obviously, $X$ is has a countable $\pi$-character by
Proposition~\ref{t3}. Then $X$ is metrizable by \cite[Theorems 3.6
and 1.8]{CW}.
\end{proof}

\begin{theorem}\label{t2}
For a topological space $X$, the following are equivalent:
\begin{enumerate}
\item $X$ is a
strongly $\pi$-metrizable space;
\item $X$ has a $\sigma$-HCP strong $\pi$-base;
\item $X$ has a $\sigma$-locally finite strong $\pi$-base.
\end{enumerate}
\end{theorem}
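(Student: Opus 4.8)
The plan is to run the three implications as the cycle (1)$\Rightarrow$(2)$\Rightarrow$(3)$\Rightarrow$(1), mirroring the proof of Theorem~\ref{t0} but carrying along the point-indexed decomposition $\mathcal{P}=\bigcup_{x\in X}\mathcal{P}_{x}$ that makes a $\pi$-base \emph{strong}. The implication (1)$\Rightarrow$(2) is immediate, since every $\sigma$-discrete family is $\sigma$-HCP. The two substantive steps will both be reduced, via one observation, to constructions I already have: the refinement in the proof of (2)$\Rightarrow$(3) of Theorem~\ref{t0}, and the $\sigma$-locally-finite-to-$\sigma$-discrete discretization of \cite[Theorem 2.2]{sd}.

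The single tool I would isolate first is a \textbf{Shrinking Lemma}: if $\mathcal{P}=\bigcup_{x\in X}\mathcal{P}_{x}$ is a strong $\pi$-base and $\varphi$ assigns to each $P\in\mathcal{P}$ a nonempty open set $\varphi(P)\subseteq P$, then, setting $\mathcal{P}'_{x}=\{\varphi(P):P\in\mathcal{P}_{x}\}$ and $\mathcal{P}'=\bigcup_{x\in X}\mathcal{P}'_{x}$, the collection $\mathcal{P}'$ is again a strong $\pi$-base. The proof is one line per clause: fix $x$ and an open $O\ni x$; since $\mathcal{P}_{x}$ is a $\pi$-base at $x$ some $P\in\mathcal{P}_{x}$ has $P\subseteq O$, whence $\varphi(P)\subseteq P\subseteq O$ is a nonempty open subset of $O$; and since all but finitely many $P\in\mathcal{P}_{x}$ satisfy $P\subseteq O$, the same holds for their images $\varphi(P)\subseteq P$. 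Thus each $\mathcal{P}'_{x}$ is a strong $\pi$-base at $x$. The content is that both defining properties of ``$\mathcal{P}_{x}$ is a strong $\pi$-base at $x$'' are inherited under passing to nonempty open subsets, so any shrinking construction automatically preserves strongness (and the global $\pi$-base property then comes for free).

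With this in hand, for (2)$\Rightarrow$(3) I would take a $\sigma$-HCP strong $\pi$-base, forget the decomposition by $x$, write it as $\bigcup_{n}\mathcal{Q}_{n}$ with each $\mathcal{Q}_{n}$ HCP (assigning each element one index $n$), and run the construction of Theorem~\ref{t0}: choose the closed $B_{P}\subseteq P$, form $X(n)$, and set $\varphi(P)=\mbox{int}(B_{P})\cap X(n)$ when $P\not\subseteq I(X)$ and $\varphi(P)=\{x_{P}\}$ for a chosen isolated $x_{P}\in P$ when $P\subseteq I(X)$. Theorem~\ref{t0} already guarantees $\mathcal{P}'=\{\varphi(P):P\in\mathcal{P}\}$ is $\sigma$-locally finite, and since each $\varphi(P)$ is a nonempty open subset of $P$, the Shrinking Lemma makes it a strong $\pi$-base. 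For (3)$\Rightarrow$(1) I would do the identical thing to \cite[Theorem 2.2]{sd}: regard the given $\sigma$-locally finite strong $\pi$-base as an ordinary $\sigma$-locally finite $\pi$-base, apply that discretization to get a $\sigma$-discrete $\pi$-base realized as $P\mapsto\varphi(P)$ with $\varphi(P)\subseteq P$, and invoke the Shrinking Lemma to recover strongness. This closes the cycle.

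The main obstacle is not the local-finiteness or discreteness bookkeeping, which is supplied verbatim by Theorem~\ref{t0} and \cite[Theorem 2.2]{sd}, but the verification that those two constructions are genuinely \emph{shrinking} constructions, i.e. that each new basic set may be taken to be a nonempty open subset of a single old basic set. For Theorem~\ref{t0} this is transparent from the formulas above; for \cite[Theorem 2.2]{sd} I would inspect its internal construction and, if needed, record the mild refinement that the $\sigma$-discrete $\pi$-base can be chosen to shrink the given $\sigma$-locally finite one. A helpful auxiliary fact, available precisely because the base is strong, is that for each $x$ and each locally finite piece $\mathcal{R}_{n}$ the set $\mathcal{P}_{x}\cap\mathcal{R}_{n}$ is finite: a neighbourhood $N$ of $x$ meeting only finitely many members of $\mathcal{R}_{n}$ contains all but finitely many members of $\mathcal{P}_{x}$ by convergence, and those members of $\mathcal{P}_{x}$ lying in $N$ which also belong to $\mathcal{R}_{n}$ are among the finitely many members of $\mathcal{R}_{n}$ that meet $N$. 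This is the strong analogue of Lemma~\ref{l1} and is what I would use to keep the discretization under control.
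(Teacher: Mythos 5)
Your proposal is correct and takes essentially the same route as the paper: the paper also runs the cycle with (1)$\Rightarrow$(2) as obvious, obtains (2)$\Rightarrow$(3) by appealing to the proof of (2)$\Rightarrow$(3) in Theorem~\ref{t0}, and obtains (3)$\Rightarrow$(1) from Stover's discretization of a $\sigma$-locally finite $\pi$-base. Your ``Shrinking Lemma'' is precisely the observation the paper leaves implicit when it asserts that those two constructions still yield a \emph{strong} $\pi$-base, so you have simply made the same argument explicit.
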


\begin{proof}
(1)$\Rightarrow$(2). It is obvious. From \cite[Lemma 2.1]{sd}, it is
easy to see that (3)$\Rightarrow$(1).

It is easy to see that (2)$\Rightarrow$(3) by the proof of
(2)$\Rightarrow$(3) in Theorem~\ref{t0}.
\end{proof}

It is obvious that strongly $\pi$-metrizability is preserved by open
subspaces or dense subspaces. However, we have the following
questions.

\begin{question}
Is strongly $\pi$-metrizability preserved by the closures of open
subspaces?
\end{question}

\begin{question}
Let $X$ be a paracompact space. If $X$ is locally strongly
$\pi$-metrizable, then is $X$ strongly $\pi$-metrizable?
\end{question}

\begin{theorem}\label{t4}
Quasi-open and closed maps preserve strongly $\pi$-metrizability.
\end{theorem}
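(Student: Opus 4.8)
The plan is to imitate the proof of Theorem~\ref{t11}, upgrading every step from ``$\pi$-base'' to ``strong $\pi$-base'' and appealing to the characterization in Theorem~\ref{t2} in place of Theorem~\ref{t0}. Let $f:X\rightarrow Y$ be quasi-open and closed with $X$ strongly $\pi$-metrizable. By Theorem~\ref{t2}, $X$ carries a $\sigma$-HCP strong $\pi$-base $\mathcal{P}=\bigcup_{x\in X}\mathcal{P}_{x}=\bigcup_{n\in\mathbb{N}}\mathcal{P}^{(n)}$, each $\mathcal{P}^{(n)}$ being HCP. For every $y\in Y$ I would fix a point $x_{y}\in f^{-1}(y)$ and set $\mathcal{Q}_{y}=\{\mbox{int}f(P):P\in\mathcal{P}_{x_{y}}\}$ and $\mathcal{Q}=\bigcup_{y\in Y}\mathcal{Q}_{y}$. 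The goal is to show that $\mathcal{Q}$ is a $\sigma$-HCP strong $\pi$-base for $Y$, which by Theorem~\ref{t2} makes $Y$ strongly $\pi$-metrizable.

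For the $\sigma$-HCP part, note that $\mathcal{Q}\subset\{\mbox{int}f(P):P\in\mathcal{P}\}$. Since $f$ is closed it preserves HCP families, so $f(\mathcal{P}^{(n)})$ is HCP; because $\mbox{int}f(P)\subset f(P)$, the family $\{\mbox{int}f(P):P\in\mathcal{P}^{(n)}\}$ is again HCP (choosing a subset of $\mbox{int}f(P)\subset f(P)$ stays closure-preserving). Thus, setting $\mathcal{Q}^{(n)}=\{\mbox{int}f(P):P\in\mathcal{P}^{(n)}\}\cap\mathcal{Q}$, one obtains $\mathcal{Q}=\bigcup_{n}\mathcal{Q}^{(n)}$ with each $\mathcal{Q}^{(n)}$ HCP, i.e.\ $\mathcal{Q}$ is $\sigma$-HCP.

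The heart of the argument is checking that each $\mathcal{Q}_{y}$ is a strong $\pi$-base at $y$. First, quasi-openness guarantees $\mbox{int}f(P)\neq\emptyset$, so every member of $\mathcal{Q}_{y}$ is a nonempty open set. Given an open $O\ni y$, continuity makes $f^{-1}(O)$ an open neighborhood of $x_{y}$. Since $\mathcal{P}_{x_{y}}$ is a $\pi$-base at $x_{y}$, some $P\in\mathcal{P}_{x_{y}}$ lies in $f^{-1}(O)$, whence $\mbox{int}f(P)\subset f(P)\subset O$; this yields the $\pi$-base property at $y$. For the convergence clause, the strong $\pi$-base condition at $x_{y}$ says that all but finitely many $P\in\mathcal{P}_{x_{y}}$ satisfy $P\subset f^{-1}(O)$, and each such $P$ gives $\mbox{int}f(P)\subset O$; the members of $\mathcal{Q}_{y}$ that fail to lie in $O$ can only come from the finitely many exceptional $P$, so $O$ contains all but finitely many elements of $\mathcal{Q}_{y}$.

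I expect the main obstacle to be the convergence clause of the strong $\pi$-base definition, rather than the $\pi$-base or HCP clauses. The subtlety is that the assignment $P\mapsto\mbox{int}f(P)$ need not be injective, so one must argue at the level of the \emph{sets} $\mbox{int}f(P)$ and confirm that the finitely many $P$ violating $P\subset f^{-1}(O)$ produce only finitely many offending elements of $\mathcal{Q}_{y}$. This is exactly the step where continuity (to pull $O$ back) and quasi-openness (to keep images nonempty open) are both indispensable, and it is what distinguishes this argument from the plain $\pi$-metrizable case of Theorem~\ref{t11}.
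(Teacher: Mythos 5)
Your proof is correct and follows essentially the same route as the paper: apply Theorem~\ref{t2}, push the $\sigma$-HCP strong $\pi$-base forward via $P\mapsto\mbox{int}f(P)$, and verify the strong $\pi$-base condition at each $y$ using a fixed $x_{y}\in f^{-1}(y)$. You merely spell out the verification of the $\pi$-base and convergence clauses (and the harmless non-injectivity of $P\mapsto\mbox{int}f(P)$) that the paper leaves implicit.
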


\begin{proof}
Let $f:X\rightarrow Y$ be an open and closed map, where $X$ is a
strongly $\pi$-metrizable space. It follows from Theorem~\ref{t2}
that $X$ has a $\sigma$-HCP strong $\pi$-base $\mathcal{P}$. Since
closed maps preserve HCP collections, $f(\mathcal{P})$ is a
$\sigma$-HCP collection of subsets of $Y$. Since $f$ is a quasi-open
map, $\{\mbox{int}f(P): P\in \mathcal{P}\}$ is a $\sigma$-HCP strong
$\pi$-base of $Y$. In fact, for each $y\in Y$, choose a fixed point
$x_{y}\in f^{-1}(y)$. Then $\{\mbox{int}f(P):
P\in\mathcal{P}_{x_{y}}\}$ is a strong $\pi$-base at point $y$.
Hence $Y$ is a strongly $\pi$-metrizable space by Theorem~\ref{t2}.
\end{proof}

\begin{corollary}\label{c4}
Open and closed maps preserve strongly $\pi$-metrizability.
\end{corollary}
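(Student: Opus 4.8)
The plan is to reduce Corollary~\ref{c4} to Theorem~\ref{t4} by observing that every open map is a quasi-open map. First I would recall the standing conventions of the paper: all maps are continuous and onto. Given an open and closed map $f:X\rightarrow Y$, I would verify the quasi-open condition directly. For any non-empty open subset $U$ of $X$, the openness of $f$ gives that $f(U)$ is open in $Y$; since $f$ is onto, $f(U)$ is non-empty. Hence $\mbox{Int}f(U)=f(U)\neq\emptyset$, so $f$ is a quasi-open map.

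Once this is established, $f$ is simultaneously a quasi-open and closed map, and Theorem~\ref{t4} applies directly to yield that $f$ preserves strongly $\pi$-metrizability. This is precisely the same pattern by which Corollary~\ref{c2} was deduced from Theorem~\ref{t11} in the $\pi$-metrizable setting, so the argument parallels one already made in the paper.

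There is essentially no obstacle here, since all of the substantive work has already been carried out in the proof of Theorem~\ref{t4}. The only point deserving a word of care is the elementary observation that, for an open surjection, the image of a non-empty open set is a non-empty open set and therefore coincides with its own interior; this is exactly what makes the passage from \emph{open} to \emph{quasi-open} immediate.
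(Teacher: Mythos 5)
Your proposal is correct and follows exactly the route the paper intends: the corollary is an immediate consequence of Theorem~\ref{t4} once one notes that an open surjection is quasi-open, which is the same (implicit) reduction used for Corollary~\ref{c2}. Nothing further is needed.
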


\begin{corollary}
Irreducible closed maps preserve strongly $\pi$-metrizability.
\end{corollary}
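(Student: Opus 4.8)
The plan is to reduce this statement to Theorem~\ref{t4} in exactly the way that Corollary~\ref{c3} reduces to Theorem~\ref{t11}. Since Theorem~\ref{t4} already asserts that quasi-open and closed maps preserve strongly $\pi$-metrizability, the entire content to be verified is that an irreducible closed map is quasi-open; once this is in hand, the conclusion is immediate. So the strategy is a single structural observation followed by a citation, and I would not expect any genuine difficulty.

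To establish the key point, I would take an arbitrary irreducible closed map $f:X\rightarrow Y$ and an arbitrary nonempty open subset $U$ of $X$, and show that $\mbox{Int}f(U)\neq\emptyset$. The complement $X\setminus U$ is a proper closed subset of $X$, so by irreducibility $f(X\setminus U)\neq Y$. Because $f$ is closed, $f(X\setminus U)$ is closed in $Y$, and hence $W=Y\setminus f(X\setminus U)$ is a nonempty open subset of $Y$. For each $y\in W$ one has $f^{-1}(y)\cap(X\setminus U)=\emptyset$, that is $f^{-1}(y)\subset U$, so in particular $y\in f(U)$. This yields $W\subset f(U)$ and therefore $\mbox{Int}f(U)\supseteq W\neq\emptyset$, which is precisely the quasi-openness of $f$.

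With quasi-openness verified, the corollary follows directly from Theorem~\ref{t4}, since $f$ is both quasi-open and closed. The main (and in fact only) obstacle is the quasi-openness argument above; everything else is an immediate appeal to Theorem~\ref{t4}. I note that this same fact about irreducible closed maps was already used in the proof of Corollary~\ref{c3}, so the present corollary is a faithful parallel of that result, with Theorem~\ref{t4} replacing Theorem~\ref{t11}.
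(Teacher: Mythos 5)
Your proposal is correct and follows exactly the paper's route: the paper proves this corollary by citing Theorem~\ref{t4} together with the observation from Corollary~\ref{c3} that irreducible closed maps are quasi-open. You merely spell out the quasi-openness argument (via $W=Y\setminus f(X\setminus U)$) that the paper leaves implicit, which is a valid and standard verification.
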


\begin{proof}
It is easy to see by Theorem~\ref{t4} and the proof of
Corollary~\ref{c3}.
\end{proof}

\begin{example}
There exists a non-strongly $\pi$-metrizable space $X$, which is the
inverse image of a strongly $\pi$-metrizable space under
a perfect map.
\end{example}

\begin{proof}
Let $D$ be an uncountable set and endow $D$ with a discrete
topology. Let $Z$ be the Alexandroff compactification of $D$, that
is, $Z=D\cup\{z\}$. Let $X=\psi(\mathbb{N})\times Z$ be the product
topology, where $\psi(\mathbb{N})$ is Isbell-Mr\'{o}wka space. Then
the projection $\pi_{1}: X\rightarrow \psi(\mathbb{N})$ is a perfect
map. However, $X$ is a non-strongly $\pi$-metrizable space. Suppose
not, then the point $x=(1, z)\in X$ has a countable strong
$\pi$-base $\mathcal{P}_{x}$ by Proposition~\ref{t3}. Since every
open neighborhood of $z$ in $Z$ has the form $Z-A$ with $A$ a finite
subset of $D$, there exists a countable subset $L\subset D$ such
that $(D-L)\subset \pi_{2}(P)$ for each $P\in\mathcal{P}_{x}$.
Choose a point $y\in D-L$. Then $\{1\}\times (Z-\{y\})$ is an open
neighborhood of $(1, z)$. But $P\not\subset \{1\}\times (Z-\{y\})$
for each $P\in\mathcal{P}_{x}$, this is a contradiction.
\end{proof}

\begin{question}
Do irreducible perfect maps inversely preserve strongly
$\pi$-metrizability?
\end{question}

\begin{theorem}\label{t7}
Let $Y$ be a Fr$\acute{e}$chet space. Then $Y$ is the image of a
strongly $\pi$-metrizable space $X$ under a perfect map if and only
if $Y$ is strongly $d$-separable.
\end{theorem}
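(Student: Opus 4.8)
The plan is to prove the two implications separately, and to note at the outset that the Fréchet hypothesis is needed only for the sufficiency.

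For necessity, suppose $f\colon X\to Y$ is perfect with $X$ strongly $\pi$-metrizable. First I would observe that a strongly $\pi$-metrizable space is itself strongly $d$-separable: taking a $\sigma$-discrete strong $\pi$-base $\mathcal P=\bigcup_n\mathcal P_n$ with each $\mathcal P_n$ discrete and choosing a point $x_P\in P$ for each $P$, the sets $K_n=\{x_P\colon P\in\mathcal P_n\}$ are closed and discrete (the singletons form a discrete, hence locally finite, family), while $\bigcup_nK_n$ is dense because $\mathcal P$ is a $\pi$-base. Then I would show that perfect maps preserve strong $d$-separability: for a closed discrete $K$, each fiber meets $K$ in a compact discrete, hence finite, set, so $f|_K$ is a closed finite-to-one map onto the closed set $f(K)$; a short argument — isolate the finite preimage of a point in disjoint neighborhoods and use closedness of $f$ to expel the remaining points of $K$ — shows $f(K)$ is discrete, and $\bigcup_nf(K_n)=f(\bigcup_nK_n)$ is dense since $f$ is a continuous surjection. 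Applying this to the witnessing family for $X$ gives that $Y$ is strongly $d$-separable, with no appeal to the Fréchet property.

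For sufficiency I would imitate the product construction of Theorem~\ref{t1}, realising $X$ as a \emph{closed} subspace of $\mathbb{N}_{\ast}\times Y$, so that perfectness of $\pi_Y|_X$ comes for free (projection along the compact factor $\mathbb{N}_{\ast}$ is closed, and fibers are closed subsets of the compact slices $\mathbb{N}_{\ast}\times\{y\}$). Starting from a witnessing family $\{K_n\}$ for the strong $d$-separability of $Y$, I would first pass to the increasing family $K'_n=K_1\cup\cdots\cup K_n$, which is still closed discrete (a finite union of closed discrete sets is closed discrete), and put $X=(\{p\}\times Y)\cup\bigcup_n(\{n\}\times K'_n)\subseteq\mathbb{N}_{\ast}\times Y$, where $p$ is the point at infinity of $\mathbb{N}_{\ast}$. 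Since each $\{n\}$ is clopen in $\mathbb{N}_{\ast}$ and each $K'_n$ is closed in $Y$, a direct inspection of limit points shows that $X$ is closed, that $\pi_Y(X)=Y$, and that the isolated points of interest are exactly the $(n,d)$ with $d\in K'_n$. It then remains to exhibit a $\sigma$-discrete strong $\pi$-base. For fixed $n$ the family $\mathcal Q_n=\{\{(n,d)\}\colon d\in K'_n\}$ is discrete because $\{n\}\times K'_n$ is closed discrete, so $\mathcal P=\bigcup_n\mathcal Q_n$ (all isolated singletons) is $\sigma$-discrete. The crux is to find, at every limit point $(p,y)$, a strong $\pi$-base drawn from these singletons, i.e. a sequence of isolated points converging to $(p,y)$; here the Fréchet hypothesis enters. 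Choosing $d_i\to y$ with $d_i\in D=\bigcup_nK'_n$ and letting $\ell_i$ be the level of $d_i$, the points $(n_i,d_i)$ with $n_i=\max(i,\ell_i)$ lie in $X$ (since $K'$ is increasing, $d_i\in K'_{n_i}$), satisfy $n_i\to\infty$ and $d_i\to y$, and so converge to $(p,y)$; for $y\in D$ one may instead use the vertical sequence $(n,y)_n$. As every neighborhood of $(p,y)$ has the form $((\mathbb{N}_{\ast}\setminus F)\times V)\cap X$, it contains all but finitely many of these points, giving a strong $\pi$-base at $(p,y)$. Collecting these choices exhibits $\mathcal P$ as a $\sigma$-discrete strong $\pi$-base, so $X$ is strongly $\pi$-metrizable by the very definition.

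I expect the main obstacle to be the simultaneous fulfilment of the three demands on $X$ — closedness in the product (for perfectness), $\sigma$-discreteness of the isolated singletons, and the existence of a strong $\pi$-base at \emph{every} limit point — since the naive ``fan'' construction that attaches an independent convergent sequence at each point of $Y$ fails to be closed, as low-index isolated points escape to non-closed images. The two devices that resolve this are the embedding in $\mathbb{N}_{\ast}\times Y$, which secures closedness and compact fibers automatically, and the reindexing to the increasing decomposition $K'_n$, which is precisely what allows a Fréchet sequence in $D$ to be lifted to a diagonal sequence of isolated points climbing to the point at infinity. I would also verify the degenerate cases — points $y$ isolated in $Y$, and limit points $(p,y)$ that happen to be isolated in $X$ — which are covered by the vertical sequences or by trivial singletons.
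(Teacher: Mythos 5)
Your proof is correct and follows essentially the same route as the paper's: the same realization of $X=(\{p\}\times Y)\cup\bigcup_{n}(\{n\}\times E_{n})$ as a closed subspace of $\mathbb{N}_{\ast}\times Y$ with the increasing closed discrete sets $E_{n}=K_{1}\cup\cdots\cup K_{n}$, the same discrete families of isolated singletons, and the same split into vertical sequences at points of $\{p\}\times\bigcup_{n}E_{n}$ versus diagonal sequences obtained from the Fr\'echet property at the remaining limit points. The only difference is that you spell out the necessity (strongly $\pi$-metrizable implies strongly $d$-separable, and perfect, indeed closed, maps preserve strong $d$-separability), which the paper dismisses as obvious; your argument there is also correct.
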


\begin{proof}
Necessity. It is obvious.

Sufficiency. Let $\bigcup_{n\in \mathbb{N}}D_{n}$ be a dense subset
for $Y$, where $D_{n}$ is a closed and discrete subspace of $Y$ for
each $n\in \mathbb{N}$. Put $E_{n}=\bigcup_{i=1}^{n}D_{i}$ for each
$n\in \mathbb{N}$. Obviously, for each $n\in \mathbb{N}$, $E_{n}$ is
a closed and discrete subspace of $Y$.

By the same notations in Theorem~\ref{t1}. Let $X=(\cup\{\{n\}\times
E_{n}: n\in \mathbb{N}\})\cup (\{p\}\times Y)$. Then $X$ is a
strongly $\pi$-metrizable space. Indeed, let $\mathcal{B}_{n}=\{(n,
d): d\in E_{n}\}$ for each $n\in \mathbb{N}$. Obviously,
$\mathcal{B}_{n}$ is discrete for each $n\in \mathbb{N}$. Then
$\mathcal{B}=\bigcup_{n\in \mathbb{N}}\mathcal{B}_{n}$ is a strong
$\pi$-base for $X$.

(i) If $x=(n, d)\in \{n\}\times E_{n}$ for some $n\in \mathbb{N}$,
then let $\mathcal{B}_{x}=\{(n, d)\}$, and therefore,
$\mathcal{B}_{x}$ is a strong $\pi$-base at point $x$.

(ii) If $x=(p, d)\in\{p\}\times \bigcup_{n\in \mathbb{N}}E_{n}$,
then there exists an $m\in \mathbb{N}$ such that $d\in E_{m}$. We
let $\mathcal{B}_{x}=\{(i, d): i\geq m\}\subset \mathcal{B}$. Then
$\mathcal{B}_{x}$ is a strong $\pi$-base at point $x$.

(iii) If $x=(p, d)\in\{p\}\times (Y\setminus\bigcup_{n\in
\mathbb{N}}E_{n})$, then $d\in \overline{\bigcup_{n\in
\mathbb{N}}E_{n}}$. Since $Y$ is Fr$\acute{e}$chet, there exists a
sequence $\{d_{n}\}_{n=1}^{\infty}$ in $\bigcup_{n\in \mathbb{N}}E_{n}$ such that
$d_{n}\rightarrow d$ as $n\rightarrow\infty$. By the induction on
$\mathbb{N}$, we can define an increasing sequence
$\{m_{d_{n}}\}_{n=1}^{\infty}$ in $\mathbb{N}$ such that, for
each $n\in\mathbb{N}$, $m_{d_{n}}>n$, and $d_{n}\in E_{m_{d_{n}}}$.
Let $\mathcal{B}_{x}=\{(m_{d_{n}}, d_{n}): n\in\mathbb{N}\}$. Then
$\mathcal{B}_{x}$ is a strong $\pi$-base at point $x$. In fact, let
$O$ be an open neighborhood at point $x$. Then there exist an
$\mathbb{N}_{m}=\mathbb{N}_{\ast}\setminus\{1, 2, \cdots, m-1\}$ and
an open neighborhood $U$ at point $d$ in $Y$ such that
$(\mathbb{N}_{m}\times U)\cap X\subset O$. Since $d_{n}\rightarrow
d$, there is a $l\in \mathbb{N}$ such that $\{d_{n}: n\geq
l\}\subset U$. Put $k=\mbox{max}\{l, m\}$. Then, for each $n\geq k$,
we have $(m_{d_{n}}, d_{n})\in (\mathbb{N}_{m}\times U)\cap X$.

Let $f:X\rightarrow Y$ be the natural projection map. Since
$\mathbb{N}_{\ast}$ is compact, the projection of
$\mathbb{N}_{\ast}\times Y$ onto $Y$ is a closed map. It follows
from $X$ is a closed subspace of $\mathbb{N}_{\ast}\times Y$ that
$f$ is a closed map. For each $y\in Y$, since $f^{-1}(y)$ is
homeomorphic to a subspace of $\mathbb{N}_{\ast}$ containing the
limit point $p$, $f^{-1}(y)$ is compact. Hence $f$ is a perfect map.
\end{proof}

\begin{corollary}
Let $Y$ be a Fr$\acute{e}$chet space. Then $Y$ is the image of a
strongly $\pi$-metrizable space $X$ under a closed map if and only
if $Y$ is strongly $d$-separable.
\end{corollary}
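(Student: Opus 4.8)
The plan is to obtain both directions from Theorem~\ref{t7}, the extra content being that strong $d$-separability is already a consequence of $\pi$-metrizability and is preserved under arbitrary closed maps, not merely perfect ones.

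For the sufficiency I would simply invoke Theorem~\ref{t7}. If $Y$ is a Fr\'echet, strongly $d$-separable space, that theorem produces a strongly $\pi$-metrizable space $X$ together with a \emph{perfect} map of $X$ onto $Y$; since every perfect map is in particular closed, $Y$ is the image of a strongly $\pi$-metrizable space under a closed map. No further work is needed here, and this is the only place where the Fr\'echet hypothesis is actually used (through case (iii) of the construction in Theorem~\ref{t7}).

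For the necessity, let $f\colon X\to Y$ be a closed map with $X$ strongly $\pi$-metrizable. First I would note that $X$ is $\pi$-metrizable, hence has a $\sigma$-discrete $\pi$-base $\mathcal{P}=\bigcup_{n\in\mathbb{N}}\mathcal{P}_{n}$ with each $\mathcal{P}_{n}$ discrete. Choosing a point $x_{P}\in P$ for every $P\in\mathcal{P}_{n}$ and setting $K_{n}=\{x_{P}:P\in\mathcal{P}_{n}\}$, the discreteness of the family $\mathcal{P}_{n}$ shows that each $K_{n}$ is a closed discrete subspace of $X$, while the $\pi$-base property shows that $\bigcup_{n}K_{n}$ meets every nonempty open set and is therefore dense; thus $X$ itself is strongly $d$-separable. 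Then I would push this structure forward: since $f$ is closed, each $f(K_{n})$ is closed in $Y$, and since $f$ is a continuous surjection, $\bigcup_{n}f(K_{n})=f(\bigcup_{n}K_{n})$ is dense in $Y$.

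The one step requiring genuine care, and the main obstacle, is verifying that each image $f(K_{n})$ is not merely closed but also discrete. For this I would examine the restriction $f|_{K_{n}}\colon K_{n}\to f(K_{n})$, which is a continuous surjection and is itself closed, because a set closed in $K_{n}$ is closed in $X$ (as $K_{n}$ is closed) and so has closed $f$-image. The key point is then that a closed continuous image of a discrete space is discrete: for each $t\in f(K_{n})$ the set $K_{n}\setminus(f|_{K_{n}})^{-1}(t)$ is closed in $K_{n}$, and its image under the closed map $f|_{K_{n}}$ is exactly $f(K_{n})\setminus\{t\}$, which is therefore closed in $f(K_{n})$; hence $\{t\}$ is open in $f(K_{n})$, and since $t$ is arbitrary $f(K_{n})$ is discrete. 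Combining discreteness and closedness of each $f(K_{n})$ with the density of $\bigcup_{n}f(K_{n})$ shows that $Y$ is strongly $d$-separable, completing the necessity. I would emphasize that this direction uses only the closedness of $f$ and not the Fr\'echet hypothesis, so the corollary is genuinely a strengthening of the necessity half of Theorem~\ref{t7}.
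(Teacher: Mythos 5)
Your proof is correct and follows exactly the route the paper intends (the corollary is stated without proof): sufficiency is Theorem~\ref{t7} plus the observation that perfect maps are closed, and necessity is the standard argument that a strongly ($\pi$-)metrizable space is strongly $d$-separable via point selections from a $\sigma$-discrete $\pi$-base, together with the fact that closed continuous surjections send closed discrete sets to closed discrete sets and preserve density. Your verification that $f(K_n)$ is discrete (not just closed) is the one detail worth writing down, and you handle it correctly.
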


\begin{theorem}\label{t8}
Let $Y$ be a Fr$\acute{e}$chet space. Then $Y$ is the image of a
second-countable strongly $\pi$-metrizable space $X$ under a closed
and at most two-to-one map if and only if $Y$ is separable.
\end{theorem}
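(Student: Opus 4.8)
The plan is to prove both implications, with all of the substance concentrated in the sufficiency; the necessity is purely formal. For the necessity, a second-countable strongly $\pi$-metrizable space $X$ has a countable strong $\pi$-base, which is in particular a countable $\pi$-base, and selecting one point from each of its members yields a countable dense subset of $X$; thus $X$ is separable, and since separability is preserved by continuous surjections, $Y=f(X)$ is separable. Note that the Fr\'echet hypothesis plays no role here.

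For the sufficiency I would reuse verbatim the construction of Theorem~\ref{t1}. Fix a countable dense set $\{d_{n}:n\in\mathbb{N}\}$ of $Y$ with the $d_{n}$ pairwise distinct, and set $X=\{(n,d_{n}):n\in\mathbb{N}\}\cup(\{p\}\times Y)$ with the subspace topology of $\mathbb{N}_{\ast}\times Y$, where $\mathbb{N}_{\ast}=\mathbb{N}\cup\{p\}$ is the one-point compactification of $\mathbb{N}$. Exactly as in Theorem~\ref{t1}, each $(n,d_{n})$ is isolated in $X$, the set $X$ is closed in $\mathbb{N}_{\ast}\times Y$, and the projection $f:X\rightarrow Y$ is a closed map that is at most two-to-one (the fiber over $y$ meets $\{(n,d_{n}):n\in\mathbb{N}\}$ in at most one point since the $d_{n}$ are distinct, so $|f^{-1}(y)|\leq 2$). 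What goes beyond Theorem~\ref{t1} is to verify that $X$ is \emph{second-countable strongly} $\pi$-metrizable rather than merely second-countable $\pi$-metrizable, and this is exactly where the Fr\'echet hypothesis enters.

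The key step is to exhibit a countable strong $\pi$-base. I would take $\mathcal{P}=\{\{z\}:z\ \mbox{is isolated in}\ X\}$, which is countable since the isolated points lie in $\{(n,d_{n}):n\in\mathbb{N}\}\cup\{(p,d_{n}):d_{n}\ \mbox{isolated in}\ Y\}$. First $\mathcal{P}$ is a $\pi$-base: no nonempty open subset of $X$ is contained in $\{p\}\times Y$, because any basic neighborhood $((\mathbb{N}_{\ast}\setminus F)\times U)\cap X$ of a point of $\{p\}\times Y$ contains some $(n,d_{n})$ with $n\notin F$ (density gives infinitely many $n$ with $d_{n}\in U$, while $F$ is finite), so every nonempty open set contains an isolated point. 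To promote $\mathcal{P}$ to a strong $\pi$-base I must assign each $x\in X$ a local strong $\pi$-base $\mathcal{P}_{x}\subseteq\mathcal{P}$. For isolated $x$ take $\mathcal{P}_{x}=\{\{x\}\}$, which satisfies both defining conditions trivially. For a non-isolated point $x=(p,y)$ I would produce a sequence of isolated points $(n_{k},d_{n_{k}})\rightarrow(p,y)$ with $n_{k}\rightarrow\infty$ and $d_{n_{k}}\rightarrow y$, and set $\mathcal{P}_{x}=\{\{(n_{k},d_{n_{k}})\}:k\in\mathbb{N}\}$; convergence guarantees that every neighborhood of $(p,y)$ contains all but finitely many members of $\mathcal{P}_{x}$, delivering the cofiniteness condition and the local $\pi$-base property at once. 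Since $\bigcup_{x}\mathcal{P}_{x}=\mathcal{P}$, this makes $\mathcal{P}$ a countable strong $\pi$-base.

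The main obstacle is the construction of that convergent sequence for $x=(p,y)$, the delicate case being $y=d_{m}$ non-isolated in $Y$, where approximating by the constant point $(m,d_{m})$ is useless and $(p,y)$ must be reached through \emph{other} isolated points. I would treat this uniformly by putting $A=\{d_{n}:n\in\mathbb{N}\}\setminus\{y\}$: since $y$ is non-isolated, every neighborhood of $y$ meets the nonempty open set $Y\setminus\{y\}$ and hence, by density, contains a point of $A$, so $y\in\overline{A}$. The Fr\'echet property of $Y$ then supplies a sequence in $A$ converging to $y$; its terms are $d_{n_{k}}\neq y$, so it cannot be eventually constant (a constant sequence in a $T_{1}$ space converges only to its value), whence it takes infinitely many distinct indices and, after passing to a subsequence, $n_{k}\rightarrow\infty$ while $d_{n_{k}}\rightarrow y$. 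This is precisely where the Fr\'echet hypothesis is indispensable, mirroring its role in case (iii) of the proof of Theorem~\ref{t7}. Assembling the pieces, $X$ is a second-countable strongly $\pi$-metrizable space carried onto $Y$ by a closed, at most two-to-one map, which completes the sufficiency.
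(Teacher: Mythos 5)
Your construction is exactly the paper's: the same space $X=\{(n,d_{n}):n\in\mathbb{N}\}\cup(\{p\}\times Y)\subset\mathbb{N}_{\ast}\times Y$, the same closed at most two-to-one projection, and the same countable candidate strong $\pi$-base (your family of singletons of isolated points of $X$ coincides with the paper's $\bigcup_{n}(\mathcal{P}_{n}\cup\mathcal{B}_{n})$). Where the paper merely says ``by a similar argument of Theorem~\ref{t7},'' you actually supply the argument, and your handling of the delicate case $x=(p,d_{m})$ with $d_{m}$ non-isolated --- passing to $A=\{d_{n}:n\in\mathbb{N}\}\setminus\{y\}$, invoking the Fr\'echet property, and extracting a subsequence with $n_{k}\to\infty$ --- is correct and is precisely the point Theorem~\ref{t7}'s cases do not literally cover in this smaller space; that is a genuine improvement in rigor. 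One intermediate claim in your $\pi$-base verification is, however, false as stated: it is not true that ``no nonempty open subset of $X$ is contained in $\{p\}\times Y$,'' nor that density gives infinitely many $n$ with $d_{n}\in U$ for every nonempty open $U\subset Y$. If $d_{m}$ is isolated in $Y$, then $\{(p,d_{m})\}=((\mathbb{N}_{\ast}\setminus\{m\})\times\{d_{m}\})\cap X$ is a nonempty open set inside $\{p\}\times Y$, and $U=\{d_{m}\}$ meets $\{d_{n}\}$ in exactly one point. Your final conclusion (every nonempty open set contains an isolated point of $X$, hence a member of $\mathcal{P}$) survives, because in this exceptional case the open set contains the isolated point $(p,d_{m})$, which your $\mathcal{P}$ already includes; this is exactly the case split the paper makes explicit in Theorem~\ref{t1} via the set $L=\{(p,d_{n}):\{d_{n}\}\in\tau(Y)\}$. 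Replace your one-line justification with that two-case argument (if $O$ meets no $(n,d_{n})$, then the trace of $\{d_{n}\}$ on the corresponding $U$ is finite, hence $U$ is a finite open set of isolated points and $O$ contains some isolated $(p,d_{m})$), and the proof is complete.
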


\begin{proof}
By the same notations in Theorem~\ref{t1}. Let $X=\{(n, d_{n}):
n\in \mathbb{N}\}\cup (\{p\}\times Y)$, $\mathcal{P}_{n}=\{(n,
d_{n})\}$ and $\mathcal{B}_{n}=\{(p, d_{n}): \{d_{n}\}\in\tau (Y)\}$
for each $n\in \mathbb{N}$. By a similar argument of
Theorem~\ref{t7}, we can show that $\bigcup_{n\in
\mathbb{N}}(\mathcal{P}_{n}\cup\mathcal{B}_{n})$ is a countably
strong $\pi$-base for $X$ and $Y$ is the image of $X$ under a closed
and at most two-to-one map.
\end{proof}

\begin{example}\label{e1}
There exists a Fr$\acute{e}$chet, $\pi$-metrizable, separable,
regular, and non-strongly $\pi$-metrizable space $X$. Therefore,
closed and at most two-to-one maps don't preserve strongly
$\pi$-metrizability by Theorem~\ref{t8}.
\end{example}

\begin{proof}
Let $X$ be the sequence fan space $S_{\omega}$, which is obtained
from the topological sum of $\omega$ many copies of the convergent
sequence by identifying all the limit points to a point. Then $X$ is
Fr$\acute{e}$chet, $\pi$-metrizable, regular, and separable. Let
$X=\{x_{ni}: i, n\in \mathbb{N}\}\cup \{a\}$, where
$x_{ni}\rightarrow a$ as $i\rightarrow\infty$ for each $n\in
\mathbb{N}$. However, $X$ is non-strongly $\pi$-metrizable. Suppose
not, there exists a collection $\mathcal{P}_{a}$ of open subsets of
$X$ such that $\mathcal{P}_{a}$ is a strong $\pi$-base at point $a$.
By an induction on $\mathbb{N}$, we can choose an increasing
sequence $\{n_{k}\}_{k}\subset \mathbb{N}$ and a subfamily
$\{P_{n_{k}}: k\in \mathbb{N}\}$ of $\mathcal{P}_{a}$ such that, for
each $k\in \mathbb{N}$, $P_{n_{k}}\cap\{x_{n_{k}i}: i\in
\mathbb{N}\}\neq\emptyset$ and $P_{n_{k+1}}\in
\mathcal{P}_{a}\setminus\{P_{n_{i}}: i\leq k\}$, where
$P_{n_{k}}\in\mathcal{P}_{a}$ for each $k\in \mathbb{N}$. Choose a
point $x_{n_{k}i_{n_{k}}}\in P_{n_{k}}\cap\{x_{n_{k}i}: i\in
\mathbb{N}\}$ for each $k\in \mathbb{N}$. Then $$U=\{x_{n_{k}i}:
i>i_{n_{k}}, k\in \mathbb{N}\}\cup\{x_{ni}: n\in
(\mathbb{N}\setminus\{n_{k}: k\in \mathbb{N}\}),
i\in\mathbb{N}\}\cup\{a\}$$ is an open neighborhood of $a$. But
$P_{n_{k}}\not\subset U$ for each $k\in \mathbb{N}$, this is a
contradiction with $\mathcal{P}_{a}$ is a strong $\pi$-base at point
$a$.
\end{proof}

\begin{theorem}\label{t9}
If $X_{n}$ is strongly $\pi$-metrizable for each $n\in \mathbb{N}$,
then $X=\prod_{n\in \mathbb{N}}X_{n}$ is strongly $\pi$-metrizable.
\end{theorem}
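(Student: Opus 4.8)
The plan is to construct directly a $\sigma$-discrete strong $\pi$-base for $X=\prod_{n}X_{n}$, so that strong $\pi$-metrizability follows straight from the definition (no appeal to Theorem~\ref{t2} is needed). First I would fix, for each $n$, a $\sigma$-discrete strong $\pi$-base $\mathcal{P}^{(n)}=\bigcup_{m}\mathcal{P}^{(n)}_{m}$ of $X_{n}$ with each $\mathcal{P}^{(n)}_{m}$ discrete. By Proposition~\ref{t3} and the remark preceding it, for each point $x_{n}\in X_{n}$ I may extract from $\mathcal{P}^{(n)}_{x_{n}}$ a sequence $\{P^{(n)}_{x_{n},k}\}_{k}$ that is a countably strong $\pi$-base at $x_{n}$, all of whose terms lie in $\mathcal{P}^{(n)}$; when $x_{n}$ is isolated (the only case where $\mathcal{P}^{(n)}_{x_{n}}$ can be finite) the $\pi$-base condition forces $\{x_{n}\}\in\mathcal{P}^{(n)}_{x_{n}}$, and I simply repeat it.

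The key construction is a diagonal one. For $x=(x_{n})\in X$ and $k\in\mathbb{N}$ set
$$Q^{x}_{k}=P^{(1)}_{x_{1},k}\times P^{(2)}_{x_{2},k}\times\cdots\times P^{(k)}_{x_{k},k}\times\prod_{i>k}X_{i},$$
a nonempty open set containing $x$ (only finitely many coordinates are restricted and $x_{i}\in P^{(i)}_{x_{i},k}$), and let $\mathcal{Q}_{x}=\{Q^{x}_{k}:k\in\mathbb{N}\}$. I would then verify that $\mathcal{Q}_{x}$ is a strong $\pi$-base at $x$: given a basic neighborhood $B=\prod_{i\in F}U_{i}\times\prod_{i\notin F}X_{i}$ of $x$ with $F$ finite, the inclusion $Q^{x}_{k}\subseteq B$ forces $k\geq\max F$ (so that every coordinate in $F$ is genuinely restricted) together with $P^{(i)}_{x_{i},k}\subseteq U_{i}$ for each $i\in F$; the latter holds for all but finitely many $k$ by the strong $\pi$-base property in coordinate $i$, and since $F$ is finite there is a $K$ with $Q^{x}_{k}\subseteq B$ for all $k\geq K$. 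This simultaneously yields the $\pi$-base property and the stronger "cofinitely many" clause.

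Finally I would assemble the global family $\mathcal{P}=\bigcup_{x\in X}\mathcal{Q}_{x}$ and check it is $\sigma$-discrete. Every member of $\mathcal{P}$ has the form $\prod_{i\leq k}P_{i}\times\prod_{i>k}X_{i}$ with $P_{i}\in\mathcal{P}^{(i)}_{m_{i}}$ for suitable $m_{i}$; grouping these by the index $(k,m_{1},\dots,m_{k})\in\bigcup_{k}\mathbb{N}^{k}$ partitions $\mathcal{P}$ into countably many subfamilies, and I would argue that each is discrete, since a finitely-supported product of discrete collections is discrete (at a point, intersect the witnessing neighborhoods in the finitely many restricted coordinates). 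As $\mathcal{P}=\bigcup_{x}\mathcal{Q}_{x}$ with each $\mathcal{Q}_{x}$ a strong $\pi$-base at $x$, the family $\mathcal{P}$ is a $\sigma$-discrete strong $\pi$-base and $X$ is strongly $\pi$-metrizable.

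The main obstacle is the tension intrinsic to infinite products: to keep nonempty interiors one must restrict only finitely many coordinates, yet the \emph{strong} condition demands that cofinitely many members of $\mathcal{Q}_{x}$ land inside neighborhoods that may restrict an arbitrary finite set of coordinates with arbitrarily small $U_{i}$. The diagonal device---restricting exactly the first $k$ coordinates using the $k$-th approximants---is what reconciles these, and the finiteness of $F$ combined with the strong property in each factor is precisely what lets the "all but finitely many" clause survive the passage to the product. The only remaining care is the bookkeeping for isolated coordinates (where singletons are forced into $\mathcal{P}^{(n)}_{x_{n}}$, permitting repetition) and the verification that finitely-supported products of discrete families remain discrete.
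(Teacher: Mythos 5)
Your proof is correct and follows essentially the same route as the paper: the same diagonal construction $\prod_{i\leq k}P^{(i)}_{x_i,k}\times\prod_{i>k}X_i$ built from countable strong $\pi$-bases at each coordinate (via Proposition~\ref{t3}), with the same verification of the ``all but finitely many'' clause using the finiteness of the restricted coordinate set. The only divergence is at the end, where you check $\sigma$-discreteness directly by grouping members according to $(k,m_1,\dots,m_k)$, whereas the paper settles for $\sigma$-local finiteness by citing the proof of Proposition~3.1 of Stover and then invokes Theorem~\ref{t2}; your version is self-contained but not a genuinely different argument.
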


\begin{proof}
It follows from Proposition~\ref{t3} that every point of $X_{n}$ has
a countably strong $\pi$-base for each $n\in \mathbb{N}$. For each
$n\in \mathbb{N}$, let $\mathcal{P}^{n}=\bigcup_{x(n)\in
X_{n}}\mathcal{P}_{x(n)}^{n}$ be a $\sigma$-discrete strong
$\pi$-base for $X_{n}$, where
$\mathcal{P}_{x(n)}^{n}=\{U_{x(n)}^{i}: i\in \mathbb{N}\}$. For
every $x\in X$, put
$$\mathcal{P}_{x}=\{\prod_{k=1}^{n}U_{x(k)}^{n}\times\prod_{k=n+1}^{\infty}X_{k}:
n\in \mathbb{N}\}.$$ Then $\mathcal{P}_{x}$ is a strong $\pi$-base at
point $x$. Indeed, for any $x\in U\in\tau (X)$, then $U$ has the
form $U=\prod_{i=1}^{m}W_{i}\times\prod_{i=m+1}^{\infty}X_{i}$,
where $W_{i}$ is open in $X_{i}$ for each $1\leq i\leq m$. Then for
every $1\leq i\leq m$, there is a $k_{i}\in \mathbb{N}$ such that
$U_{x(i)}^{n}\subset W_{i}$ for every $n\geq k_{i}$. Put
$k_{0}=\mbox{max}\{k_{1}, \cdots, k_{m}, m\}$. Therefore, for every
$n>k_{0}$ and $1\leq i\leq m$, $U_{x(i)}^{n}\subset W_{i}$, and
hence
$\prod_{k=1}^{n}U_{x(k)}^{n}\times\prod_{k=n+1}^{\infty}X_{k}\subset
U$ for every $n>k_{0}$.

Let $\mathcal{P}=\bigcup_{x\in X}\mathcal{P}_{x}$. By the proof of
\cite[Proposition 3.1]{sd}, it is easy to see that $\mathcal{P}$ is
$\sigma$-locally finite. Hence $X$ is strongly $\pi$-metrizable by
Theorem~\ref{t2}.
\end{proof}

\begin{corollary}
If $X_{n}$ has a strong $\pi$-base for each $n\in \mathbb{N}$, then
$X=\prod_{n\in \mathbb{N}}X_{n}$ also has a strong $\pi$-base.
\end{corollary}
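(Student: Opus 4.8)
The plan is to reuse the construction from the proof of Theorem~\ref{t9} verbatim, stopping short of the verification of $\sigma$-local finiteness, since the corollary only asserts the existence of a strong $\pi$-base and does not require it to be $\sigma$-discrete (equivalently $\sigma$-locally finite). In other words, the corollary is strictly weaker than Theorem~\ref{t9}, and its proof is precisely the initial portion of that proof.

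First I would invoke Proposition~\ref{t3}: since each $X_{n}$ carries a strong $\pi$-base, every point $x(n)\in X_{n}$ possesses a \emph{countably} strong $\pi$-base, which I write as $\mathcal{P}_{x(n)}^{n}=\{U_{x(n)}^{i}: i\in\mathbb{N}\}$. This countability is exactly what makes the coordinatewise product construction go through. Then, for each $x\in X$, I would define, just as in Theorem~\ref{t9},
$$\mathcal{P}_{x}=\Big\{\prod_{k=1}^{n}U_{x(k)}^{n}\times\prod_{k=n+1}^{\infty}X_{k}: n\in \mathbb{N}\Big\}.$$

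Next I would verify that $\mathcal{P}_{x}$ is a strong $\pi$-base at the point $x$; the argument is identical to the one already carried out in Theorem~\ref{t9}. Given any basic open set $U=\prod_{i=1}^{m}W_{i}\times\prod_{i=m+1}^{\infty}X_{i}$ containing $x$, I would use the strong $\pi$-base property at each of the finitely many nontrivial coordinates to obtain thresholds $k_{i}$ with $U_{x(i)}^{n}\subset W_{i}$ for all $n\geq k_{i}$, set $k_{0}=\max\{k_{1},\dots,k_{m},m\}$, and conclude that $\prod_{k=1}^{n}U_{x(k)}^{n}\times\prod_{k=n+1}^{\infty}X_{k}\subset U$ for every $n>k_{0}$. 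This simultaneously yields the $\pi$-base-at-$x$ property and the cofinite-containment property defining a strong $\pi$-base at $x$. Finally, setting $\mathcal{P}=\bigcup_{x\in X}\mathcal{P}_{x}$, the collection $\mathcal{P}$ is a strong $\pi$-base for $X$ directly by the definition of a strong $\pi$-base.

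There is essentially no hard step here, which is the point worth flagging: the only part of the proof of Theorem~\ref{t9} that is genuinely needed is the construction of the $\mathcal{P}_{x}$ together with the verification that each is a strong $\pi$-base at its point. The appeal to the $\sigma$-local finiteness estimate from \cite[Proposition 3.1]{sd} and the subsequent application of Theorem~\ref{t2} are both dispensable, because the conclusion no longer demands a $\sigma$-discrete (or $\sigma$-locally finite) strong $\pi$-base, merely a strong $\pi$-base. Thus the whole corollary reduces to observing that the union over $x$ of the coordinatewise products is, by definition, a strong $\pi$-base.
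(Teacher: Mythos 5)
Your proposal is correct and matches the paper's intent exactly: the corollary is stated without proof precisely because it follows from the construction of the families $\mathcal{P}_{x}$ in the proof of Theorem~\ref{t9} (via Proposition~\ref{t3}), with the $\sigma$-local finiteness step and the appeal to Theorem~\ref{t2} simply dropped. Nothing further is needed.
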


\begin{theorem}\label{t5}
Let $\kappa$ be an uncountable cardinal numbers. If $X_{\alpha}$ contains at least two points for each $\alpha
<\kappa$, then the product topology $X=\prod_{\alpha
<\kappa}X_{\alpha}$ does not have a strong $\pi$-base at any point of
$X$. In particular, $X$ is non-strongly $\pi$-metrizable.
\end{theorem}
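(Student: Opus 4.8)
The plan is to fix an arbitrary point $x=(x_{\alpha})_{\alpha<\kappa}\in X$, assume toward a contradiction that $x$ carries a strong $\pi$-base $\mathcal{P}_{x}$, and exploit the uncountability of $\kappa$ together with the fact that basic open sets in a product have finite support. The entire weight of the ``strong'' hypothesis will be spent on a single reduction at the outset: replacing $\mathcal{P}_{x}$ by a \emph{countable} subfamily that is still a $\pi$-base at $x$. After that the argument is the standard support count for products and uses only that each $X_{\alpha}$ is $T_{1}$ with at least two points.

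First I would reduce to the countable case. If $\mathcal{P}_{x}$ is finite it is already countable, so assume it is infinite and choose any countably infinite $\mathcal{Q}=\{Q_{i}:i\in\mathbb{N}\}\subseteq\mathcal{P}_{x}$. I claim $\mathcal{Q}$ is again a $\pi$-base at $x$: given an open neighborhood $O$ of $x$, the strong condition says $O$ contains all but finitely many members of $\mathcal{P}_{x}$, hence all but finitely many members of $\mathcal{Q}$ lie in $O$; as $\mathcal{Q}$ is infinite, some $Q_{i}\subseteq O$. This is precisely the observation recorded just before Proposition~\ref{t3}. Thus in every case I obtain a countable family $\mathcal{Q}=\{Q_{i}:i\in\mathbb{N}\}$ of nonempty open sets forming a $\pi$-base at $x$.

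Next comes the support count. Each $Q_{i}$ is a nonempty open set, so it contains a nonempty basic open box $B_{i}=\prod_{\alpha\in F_{i}}V_{\alpha}^{i}\times\prod_{\alpha\notin F_{i}}X_{\alpha}$ with $F_{i}\subset\kappa$ finite. Put $S=\bigcup_{i\in\mathbb{N}}F_{i}$; this is a countable union of finite sets, so $|S|\leq\aleph_{0}<\kappa$ and I may pick a coordinate $\beta\in\kappa\setminus S$. Since $X_{\beta}$ is $T_{1}$ and has at least two points, choosing $y\neq x_{\beta}$ gives a proper open neighborhood $U_{\beta}=X_{\beta}\setminus\{y\}$ of $x_{\beta}$; set $O=\pi_{\beta}^{-1}(U_{\beta})$, an open neighborhood of $x$. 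For every $i$ we have $\beta\notin F_{i}$, so $B_{i}$ projects onto all of $X_{\beta}$, and therefore $\pi_{\beta}(Q_{i})\supseteq\pi_{\beta}(B_{i})=X_{\beta}\not\subseteq U_{\beta}$, which forces $Q_{i}\not\subseteq O$. Hence no member of $\mathcal{Q}$ is contained in $O$, contradicting that $\mathcal{Q}$ is a $\pi$-base at $x$.

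This contradiction shows $x$ has no strong $\pi$-base, and since $x$ was arbitrary, $X$ has no strong $\pi$-base at any point; a $\sigma$-discrete strong $\pi$-base of $X$ would in particular restrict to a strong $\pi$-base at each point, so $X$ is not strongly $\pi$-metrizable. I expect the only genuinely delicate point to be the first reduction, and it is exactly there that ``strong'' is indispensable: an ordinary $\pi$-base at a point of this product is forced to be uncountable and yields no contradiction, whereas the strong condition manufactures a countable $\pi$-base to which the support count applies.
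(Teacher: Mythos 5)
Your proposal is correct and follows essentially the same route as the paper: reduce to a countable ($\pi$-)base at the point via the ``all but finitely many'' property (the paper cites Proposition~\ref{t3} for this), find a coordinate $\beta$ outside the countable union of finite supports so that $\pi_{\beta}(P)=X_{\beta}$ for every member $P$, and contradict the $\pi$-base property with the neighborhood $\pi_{\beta}^{-1}(X_{\beta}\setminus\{y\})$. You merely make explicit the support-counting step that the paper states without justification.
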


\begin{proof}
Suppose not; there is a point $x\in X$ such that the point $x$ has a
countably strong $\pi$-base $\mathcal{P}_{x}$. Then there exists a
$\beta <\kappa$ such that $\pi_{\beta}(P)=X_{\beta}$ for each $P\in
\mathcal{P}_{x}$. Since $X_{\beta}$ is at least two points set, we
choose a point $y\in X_{\beta}\setminus\{\pi_{\beta}(x)\}$. Then
$(X_{\beta}\setminus \{y\})\times\prod_{\alpha\in (\kappa
-\{\beta\})}X_{\alpha}$ is an open neighborhood of $x$. However,
$P\not\subset (X_{\beta}\setminus \{y\})\times\prod_{\alpha\in
(\kappa -\{\beta\})}X_{\alpha}$ for each $P\in \mathcal{P}_{x}$,
this is a contradiction.
\end{proof}

\begin{question}
Is it true that for any non-strongly $\pi$-metrizable spaces $X$ and
$Y$, we have that $X\times Y$ is also non-strongly $\pi$-metrizable?
\end{question}

\begin{question}
Does there exist a non-strongly $\pi$-metrizable space $X$ such that
$X^{n}$ is strongly $\pi$-metrizable for some $n\in \mathbb{N}$?
\end{question}

{\bf Acknowledgements}. We wish to thank
the referee for the detailed list of corrections, suggestions to the paper, and all her/his efforts
in order to improve the paper. Moreover, we would like to thank the
Professor Kedian Li for his valuable suggestions.

\bibliographystyle{amsplain}

\end{document}